\newtheorem{theorem}{Theorem}[section]
\newtheorem{lemma}[theorem]{Lemma}
\theoremstyle{definition}
\newtheorem{assumption}{Assumption}[section]
\newtheorem{definition}{Definition}[section]
\theoremstyle{remark}
\newtheorem{remark}{Remark}[section]
\newcommand\bR{\mathbb{R}}
\newcommand\bB{\mathbb{B}}
\newcommand{\bZ}{\mathbb{Z}}
\newcommand{\bA}{\mathbb{A}}
\newcommand*{\beq}{\begin{equation}}
\newcommand*{\eeq}{\end{equation}}
\newcommand{\bit}{\begin{itemize}}
\newcommand{\eit}{\end{itemize}}
\newcommand\D{\partial}
\begin{document}

\title[Finite differences for PIDEs]{On Finite difference schemes for partial  integro-differential equations of L\'evy type}

\author[K. Dareiotis]{Konstantinos Dareiotis
}
\address{Department of Mathematics, Uppsala University, Box 480, 
751 06 Uppsala, Sweden}
\email{konstantinos.dareiotis@math.uu.se}


\begin{abstract}
In this article we introduce a finite difference  approximation for integro-differential operators of L\'evy type. We approximate solutions of integro-differential equations, where the second order operator is allowed to degenerate. In  the existing literature,  the L\'evy operator is treated as a zero/first  order operator outside of  a centered  ball of radius $\delta$, leading to error estimates of order $\xi (\delta)+N(\delta)(h+\sqrt{\tau})$,  where $h$ and $\tau$ are the spatial and temporal discretization parameters respectively. In these estimates $\xi (\delta) \downarrow 0$, but   $N(\delta )\uparrow \infty$ as $\delta \downarrow 0$. In contrast, we treat the integro-differential operator as a second order operator on the whole unit ball. By this method  we obtain error estimates of order $(h+\tau^k)$ for $k\in \{1/2,1\}$, eliminating the additional errors and  the blowing up constants.  Moreover,  we do not pose any conditions on the L\'evy measure.
\end{abstract}

\maketitle


\section{Introduction}

In the present article we consider a finite difference approximation scheme  for the partial integro-differential equation (PIDE)
\begin{align}          \label{eq: main equation}
du_t(x) &= [(L_t+J) u_t(x) +f_t(x) ] \ dt, \ (t,x) \in [0,T]\times \bR, \\              \label{eq: main equation initial condition}
u_0(x) & = \psi (x), \ x \in \bR ,     
\end{align}
where the operators are given by 
\begin{align*}
L_t \phi (x) &= a_t(x)\D_x^2 \phi(x)+b_t(x)\D_x\phi(x)+c_t(x)\phi(x), \\
J\phi(x) &= \int_{\bR} \left( \phi(x+z)-\phi(x)-I_{|z|\leq 1 } z \D_x \phi(x)  \right)\nu(dz).
\end{align*}
and the coefficient of the second derivative in $L_t$ is allowed to degenerate. 
Here $\nu$ denotes a L\'evy measure on $\bR$, that is a Borel measure on $\bR$  such that
$$
\nu(\{0\}) =0, \ \int_{\bR} 1\wedge z^2 \nu(dz) < \infty.
$$
Equations of this form are of importance, since are satisfied by certain functionals  of  jump-diffusion Markov processes, that are known to be of interest  in mathematical finance (for further reading on the subject we refer to \cite{CT}).

Finite difference schemes for equations of this form have previously been studied in \cite{CV}, \cite{DeF} and \cite{MIJ}. In these articles the integro-differential operator  is either truncated, or approximated by a second order 
difference operator in a neighborhood around the origin of radius $\delta>0$,   and the remaining operator (the integral over  $\{|z|\geq \delta \}$) is treated as a zero/first order operator.   In \cite{CV}, the solution $u$  is first approximated by $u^\delta$, the solution of the corresponding equation where the integral operator over $\{ |z| \leq \delta \}$ is replaced by a second order operator (resulting to a non-degenerate equation), and $u^\delta$ is in turn  approximated using a finite difference scheme by $u^{\delta, h, \tau}$, where $h$ and $\tau$ are the spatial and temporal discretization parameters respectively. This leads to estimates of the form  $\|u-u^{\delta,h,\tau}\| \leq N f(\delta)+ N(\delta)(\sqrt{\tau} +h)$ where $f(\delta)=\int_{|z|\leq \delta} z^3 \nu(dz) / \int_{|z|\leq \delta} z^2 \nu(dz)$. In this estimate, the constant $N(\delta)$ depends on $\delta$ and blows up as $\delta \to 0$ at a rate of $\nu(\{|z|\geq \delta\}) $, which is a consequence of the fact that the integro-differential operator is treated as a first/zero order operator away from the ball $(-\delta,\delta)$. In a similar manner in \cite{MIJ}, $\delta$ is a function of $h$, and the corresponding convergence rate for the spatial approximation  is of order $h\kappa (h/2)$ where $\kappa(\delta):=\int_{(-1,1) \setminus (-\delta,\delta)} |z| \nu(dz)$. If then for example the L\'evy measure has a density of the form $|z|^{-(2+\alpha)}$ for some $\alpha \in (0,1)$, then the convergence is of order $h^{(1-\alpha)}$, which can be very slow, depending on $\alpha$. The approach in \cite{DeF} is also similar (truncation of the integro-differential operator near zero). Under some technical conditions posed on the L\'evy measure (it is assumed to have a density of a particular form, that is twice continuously differentiable and has a prescribed behavior near zero), similar estimates are obtained (with constants blowing up as the truncation parameter $\delta \to 0$). 

In contrast to these works, in the present article we do not truncate the operator near the origin. We introduce an approximation that treats the integro-differential operator as a second order operator on the whole unit ball. Our approximation is similar to the one that we introduced in \cite{KD}, \cite{DL}. However, in these works the results and their proofs rely  on the non-degeneracy of the second order differential operator. We show that the approximate operator $J^h$ that we suggest here  is negative semi-definite,  and this combined with estimates obtained in \cite{IG} for the difference operators lead to apriori estimates of the solution of the scheme independent of the discretization parameters, without posing a non-degeneracy condition. This, combined with consistency estimates for the operators lead to estimates of the form
$\|u-u^{h,\tau}\| \leq N (h+\sqrt{\tau})$ where $N$ depends only on the data of the equation. We also show, under some more spatial regularity of the data, that  $\|u-u^{h,\tau}\| \leq N (h+\tau)$.  Also, let us note here that  we do not pose any additional assumption on the L\'evy measure $\nu$. 

The analysis of the spatial approximation is done in the spirit of \cite{HY}. The equations are first discretized in space  and  solved as equations in Sobolev spaces over $\bR$ ($u^{h}$)  and  as equations on the grid ($v^{h}$). 
Error estimates are obtained in Sobolev norms for the difference $u-u^{h}$. By embedding theorems,  the restriction of $u^{h}$ on the grid   is  shown to agree with $v^{h}$.  Hence, the error estimates in Sobolev norm imply pointwise error estimates for the difference $u-v^{h}$, by virtue of Sobolev embedding theorems. The discretized equations are further discretized in time (see also \cite{MaGy}), they are solved in Sobolev spaces ($u^{h,\tau}$) and on the grid ($v^{h,\tau}$), and estimates are obtained for $u^h-u^{h,\tau}$,  which in turn imply estimates for $v^h-v^{h,\tau}$. 

For degenerate equations not involving non-local operators we refer to \cite{GK} ,  \cite{GFD}, \cite{IG} and \cite{GG} where acceleration is also obtained in the convergence with respect to the spatial discretization parameter by means of Richardson's extrapolation. In the last three articles the results are obtained in a more general, stochastic setting, but the results remain optimal for deterministic equations as well.

In conclusion let us introduce some notation. By $u_t(x)$ we denote the value of a function $u:[0,T] \times \bR \to \bR$ at $(t,x)\in [0,T] \times \bR $ and when $u$ is understood as a function of $t$ with values in some function space (function of $x\in \bR$) we will write  $u_t:=u_t(\cdot)$ for $t\in [0,T]$.   By $\D_x $ we denote the derivative operator with respect to the spatial variable. The notation $C^\infty_c$ stands for the set of all smooth,  compactly supported,  real functions on $\bR$. We denote by $(\cdot,\cdot)$ and $\|\cdot\|_{L_2}$ the inner product and the norm respectively in $L_2(\bR)$. For an integer $l\geq 0$ , $H^l$ will be the Sobolev space of all function in $L_2(\bR)$ having distributional derivatives up to order $l$ in $L_2(\bR)$, with the inner product
$$
(f,g)_l =\sum_{j=0}^l (\D^j_x f, \D^j_x g),
$$
and we denote the corresponding norm by $\|\cdot\|_{H^l}$. For real number $\alpha, \beta$, we use the notation  $\alpha \wedge \beta:= \min\{\alpha, \beta\}$.  We use the notation $N$ for constants that may change from line to line. In the proofs of lemmas/theorems, the dependence of $N$ to certain parameters is given at the statement of the corresponding lemma/theorem.

\section{Formulation of the main results}     \label{Formulation}

In this section we introduce our scheme and we state our main results. 
From now on we will use the following notations 
$$
 \mu_0 := \nu(\bR \setminus [-1,1]),  \ \mu_2:= \int_{|z|\leq1} z^2 \nu(dz). 
$$
\begin{assumption}   \label{as: bounded coef}
Let $m \geq 1$ be an integer. 
\begin{itemize}
\item[i)] The functions $a, b,c: [0,T] \times \bR \to \bR$  are measurable in $(t,x)$. The functions $b,c$ and the function $a$,  together with their spatial derivatives up to order $m$ and up to order $\max(m,2)$ respectively, are continuous in $x \in \bR$ and bounded  in magnitude by a constant $K$, uniformly in $t \in [0,T]$. 

\item[ii)] The initial condition $\psi$ belongs to $H^m$ and $f:[0,T] \to H^m$ is a measurable function such that 
$$
\mathcal{K}_m^2=\|\psi\|_{H^m}^2+\int_0^T \|f_t\|^2_{H^m} dt < \infty. 
$$
\end{itemize}
\end{assumption}

\begin{assumption}                \label{as: elipticity}
For all $(t,x) \in [0,T] \times \bR$, we have $a_t(x) \geq 0$.
\end{assumption}

Notice that for $ \phi, \varphi \in C^\infty_c$, by virtue of Taylor's formula and integration by parts  we have
\begin{align*}
(J\phi, \varphi) =& -\int_{|z|\leq 1} \int_0^1 (1-\theta) z^2 (\D_x \phi(\cdot+\theta z), \D_x \varphi ) d\theta \nu(dz) \\
& +\int_{|z|>1} (\phi(\cdot+z)+ \phi, \varphi) \nu(dz).
\end{align*}
The solution of \eqref{eq: main equation}-\eqref{eq: main equation initial condition} is understood in the following sense.
\begin{definition}                      \label{de: solution main equation}
An $H^1$-valued weakly continuous function $(u_t)_{t \in [0,T]}$ is a solution to \eqref{eq: main equation}-\eqref{eq: main equation initial condition} if for all $\phi \in C^\infty_c$
\begin{align*}
(u_t,\phi)=(\psi, \phi)&+\int_0^t (\D_xu_s, -\phi \D_x a_t-a_s\D_x\phi+b_s\phi )+(c_s u_s , \phi) \ ds \\
&  -\int_0^t \int_{|z|\leq 1} \int_0^1 (1-\theta) z^2 (\D_x u_s (\cdot+\theta z), \D_x \phi ) d\theta \nu(dz) ds \\
& +\int_0^t \int_{|z|>1} (u_s(\cdot+z)+ u_s, \phi) \nu(dz) ds.
\end{align*}
\end{definition}
The following well-posedness result can be found in \cite{KD1} and \cite{ML}. 

\begin{theorem}
Let Assumptions \ref{as: bounded coef} and \ref{as: elipticity} hold. Then \eqref{eq: main equation}-\eqref{eq: main equation initial condition} has a unique solution $u:[0,T]\to H^1$. Moreover,  $u_t$ belongs to  $H^m$ for all $t\in [0,T]$, it is weakly continuous as $H^m$-valued function, strongly continuous as function with values $H^{m-1}$, and the following estimate holds
$$
\sup_{t \leq T}\|u_t \|^2_{H^m} \leq N \mathcal{K}^2_m,
$$
where $N$ is a constant depending only on $T$, $m$, $K$, $\mu_0$ and $\mu_2$. 
\end{theorem}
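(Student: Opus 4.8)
The proof rests on two structural facts. First, $J$ is translation invariant in $x$, hence a Fourier multiplier with symbol $\psi(\xi)=\int_{\bR}\big(e^{i\xi z}-1-I_{|z|\le 1}\,i\xi z\big)\,\nu(dz)$; since $\mathrm{Re}\,\psi(\xi)=\int_{\bR}(\cos(\xi z)-1)\,\nu(dz)\le 0$, the operator $J$ commutes with $\D_x$ and $(J\phi,\phi)_l=\sum_{j\le l}(\D_x^j J\phi,\D_x^j\phi)\le 0$ for every $l\le m$, a bound that persists for the $H^{-1}$–$H^1$ duality pairing by density. Second, one integration by parts turns the principal part into $(a\D_x^2\phi,\phi)=-(a\D_x\phi,\D_x\phi)-(\D_x a\,\D_x\phi,\phi)$, where the first term is $\le 0$ by Assumption~\ref{as: elipticity} and the second equals $-\tfrac12(\D_x^2 a\,\phi,\phi)$, bounded by $N\|\phi\|_{L_2}^2$; the analogous computation for $\D_x^j(L_t\phi)$ paired with $\D_x^j\phi$, $j\le m$, shows that after integrating by parts once the commutators produce only terms controlled by $N\|\phi\|_{H^m}^2$ — this is exactly why $a$ is assumed to have bounded derivatives up to order $\max(m,2)$ and $b,c$ up to order $m$. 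Consequently $(L_t\phi+J\phi,\phi)_m\le N\|\phi\|_{H^m}^2$ for smooth $\phi$, with $N$ depending only on $m,K,\mu_0,\mu_2$.

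Since $a$ may vanish, $L_t+J$ is not coercive, so I would establish existence by vanishing viscosity. For $\varepsilon>0$ let $L_t^\varepsilon:=L_t+\varepsilon\D_x^2$; the equation with $L_t^\varepsilon+J$ is uniformly parabolic, the operator is bounded from $H^{m+1}$ to $H^{m-1}$ (using $\|J\phi\|_{H^{m-1}}\le N(\mu_2\|\phi\|_{H^{m+1}}+\mu_0\|\phi\|_{H^{m-1}})$) and satisfies a Gårding inequality on the Gelfand triple $H^{m+1}\hookrightarrow H^m\hookrightarrow H^{m-1}$, so the standard variational theory produces a unique $u^\varepsilon\in C([0,T];H^m)\cap L_2(0,T;H^{m+1})$ with $\D_t u^\varepsilon\in L_2(0,T;H^{m-1})$. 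Applying the chain rule in this triple, discarding the non-positive contributions of the $\varepsilon$–term, the $a$–term and the $J$–term, and using the estimates of the previous paragraph, I get
$$
\frac{d}{dt}\|u^\varepsilon_t\|_{H^m}^2\le N\|u^\varepsilon_t\|_{H^m}^2+\|f_t\|_{H^m}^2 ,
$$
with $N=N(T,m,K,\mu_0,\mu_2)$; Grönwall's lemma yields $\sup_{t\le T}\|u^\varepsilon_t\|_{H^m}^2\le N\mathcal{K}_m^2$ uniformly in $\varepsilon$, and the equation then bounds $\D_t u^\varepsilon$ in $L_2(0,T;H^{m-2})$ uniformly in $\varepsilon$.

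By weak-$*$ compactness there is a sequence $\varepsilon_n\downarrow 0$ with $u^{\varepsilon_n}\to u$ weak-$*$ in $L_\infty(0,T;H^m)$. Every term of the weak formulation in Definition~\ref{de: solution main equation} is linear in $u$, so it passes to the limit along $\varepsilon_n$: the first- and zeroth-order terms and the $|z|>1$ term by weak-$*$ convergence of $u^{\varepsilon_n}$ and $\D_x u^{\varepsilon_n}$; the viscosity term because $\varepsilon_n(\D_x^2 u^{\varepsilon_n}_s,\phi)=\varepsilon_n(u^{\varepsilon_n}_s,\D_x^2\phi)\to 0$; and the $|z|\le 1$ term by Fubini and dominated convergence, the $z$–integral being against the finite measure $z^2\nu(dz)$ and the integrand $(\D_x u^{\varepsilon_n}_s(\cdot+\theta z),\D_x\phi)$ being dominated uniformly by $N\mathcal{K}_m\|\D_x\phi\|_{L_2}$. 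Hence $u$ solves \eqref{eq: main equation}–\eqref{eq: main equation initial condition}, and $\sup_{t\le T}\|u_t\|_{H^m}^2\le N\mathcal{K}_m^2$ by weak lower semicontinuity. Reading $\D_t u=(L_t+J)u+f$ off the equation shows $\D_t u\in L_2(0,T;H^{m-2})$; combined with $u\in L_\infty(0,T;H^m)$, the classical interpolation lemma gives a representative of $u$ in $C([0,T];H^{m-1})$, and boundedness in $H^m$ together with the density of $C^\infty_c$ in $H^{-m}$ upgrades this to weak continuity as an $H^m$–valued function.

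For uniqueness, if $u,\bar u$ are two solutions then $w:=u-\bar u$ solves the homogeneous equation with $w_0=0$; since $w\in L_\infty(0,T;H^1)$ and, from the equation, $\D_t w\in L_2(0,T;H^{-1})$, the chain rule for $t\mapsto\|w_t\|_{L_2}^2$ is justified, and the integration-by-parts identities above together with $(Jw_t,w_t)\le 0$ give $\frac{d}{dt}\|w_t\|_{L_2}^2\le N\|w_t\|_{L_2}^2$, whence $w\equiv 0$ by Grönwall. The main obstacle throughout is the degeneracy $a\ge 0$: it rules out a direct variational solution of \eqref{eq: main equation} and forces the vanishing-viscosity detour, and — more delicately — it means the $H^m$ energy estimate must be closed using only $-(a\D_x\phi,\D_x\phi)\le 0$ and commutators that can be absorbed into $N\|\phi\|_{H^m}^2$ after a single integration by parts, which is precisely what dictates the regularity demanded of $a,b,c$ in Assumption~\ref{as: bounded coef}; one must also track that none of these constants depend on $\nu$ beyond $\mu_0$ and $\mu_2$.
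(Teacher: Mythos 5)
The paper does not actually prove this theorem: it is quoted as known, with the proof deferred to \cite{KD1} and \cite{ML}. So there is no in-paper argument to compare against, and your proposal has to be judged on its own. As an outline it is essentially correct and is in the same spirit as what the cited references do (a priori $H^m$ estimates for the degenerate operator plus an approximation by non-degenerate problems). Two of your observations are exactly the right ones: the Fourier-multiplier computation showing $(J\phi,\phi)_l\le 0$ is the continuous counterpart of the paper's Lemma \ref{lem: non positive operator} for $J^h$, and the fact that in one spatial dimension a single integration by parts turns $(\D_x a\,\D_x^{j+1}\phi,\D_x^j\phi)$ into $-\tfrac12(\D_x^2 a\,\D_x^j\phi,\D_x^j\phi)$ is precisely what lets the $H^m$ estimate close under degeneracy and explains the $\max(m,2)$ in Assumption \ref{as: bounded coef}. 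The vanishing-viscosity construction, the uniform bounds, and the uniqueness argument via the Lions chain rule on the triple $H^1\subset L_2\subset H^{-1}$ are all sound. Two routine points are glossed over and should be written out in a full proof: (a) the weak-$*$ limit in $L_\infty(0,T;H^m)$ only identifies $u$ up to a null set of times, so to pass to the limit in the integral identity \emph{for every fixed $t$} (and to recover $u_0=\psi$) you need the equicontinuity $\|u^\varepsilon_t-u^\varepsilon_s\|_{H^{m-2}}\le |t-s|^{1/2}\|\D_t u^\varepsilon\|_{L_2(H^{m-2})}$ coming from your uniform bound on $\D_t u^\varepsilon$, giving convergence in $C([0,T];H^{m-2}_{w})$ along a subsequence; (b) the Gårding inequality and the one-sided bound $((L_t+J)\phi,\phi)_{H^m}\le N\|\phi\|^2_{H^m}$ are first proved for smooth compactly supported $\phi$ and then extended to the $H^{m+1}$--$H^{m-1}$ duality pairing by density and the boundedness of $L_t+J:H^{m+1}\to H^{m-1}$. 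Neither point is an obstacle.
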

\begin{remark}
If Assumption \ref{as: bounded coef} holds with $m \geq 2$ in the above theorem, then the solution is strongly continuous $H^1$ valued function, which by the continuous  embedding $H^1 \hookrightarrow C^{0, 1/2}$ (space of bounded $1/2$-H\"older  continuous functions with the usual norm) implies that the solution $u_t(x)$ is a continuous function of $(t,x) \in [0,T] \times \bR$. 
\end{remark}

For $\lambda \in \bR \setminus \{0\}$  we define the following operators
 $$
\delta_\lambda \phi(x):=\frac{\phi(x+\lambda)-\phi(x)}{\lambda},  
\ \delta^\lambda \phi(x):= \frac{(\delta_\lambda+\delta_{-\lambda})\phi(x)}{2}. 
$$
We continue with the approximation of the integro-differential operator. 
For $h \in (0,1)$ we will denote our grid by $\mathbb{G}_h:=h \mathbb{Z}$, and for  integers $k  \geq 1$ we define
$$
B^h_k:=((k-1)h,kh],
$$
while for  integers $k \leq -1$ we define 
$$
B^h_k:= [kh, (k+1) h ).
$$
Notice that $B^h_0$ is not defined. From now on we assume that $h\in \{ 1/n : n \in \mathbb{N}_+\}=: \mathfrak{N}$.  We set $\bA_h:= \{ m \in \bZ : |m| \leq 1/h , m \neq 0 \}$ and $\bB_h:= \bZ \setminus( \bA_h \cup \{0\})$. 
Let us define the operators 
\begin{align*}
J_1^h \phi(x) &:= \sum_{k \in \bA_h} \zeta^h_k \sum_{l=0}^{|k|-1} \theta_k^l  \delta_{-h} \delta_h \phi(x+s_k l h ),
\\
J_2^h \phi (x)&:= \sum_{k \in \bB_h} \left(\phi (x+hk)-\phi (x) \right)\nu(B^h_k), 
\end{align*}
where 
$$
s_k = \frac{k}{|k|}, \ \zeta^h_k:= \int_{B^h_k} z^2 \ \nu(dz), \ \theta_k^l := \int_{l/|k|}^{(l+1)/|k|} (1-\theta) \ d \theta.
$$
We denote $J^h:=J^h_1+J^h_2$. The differential operator $L_t$ is approximated by $L^h_t$, given by
$$
L_t^h\phi(x):=a_t (x)\delta^h\delta^h\phi(x)  +b_t(x)\delta^h\phi(x) +c_t\phi(x). 
$$

We will write $l_2(\mathbb{G}_h)$ for the set of all real valued function  $\phi$ on $\mathbb{G}_h$ such that 
$$
\|\phi\|^2_{l_2(\mathbb{G}_h)}:= h \sum_{x\in \mathbb{G}_h} |\phi(x)|^2< \infty.
$$
We will denote the corresponding inner product by $(\cdot, \cdot)_{l_2(\mathbb{G}_h)}$. 
Let us now consider in $l_2(\mathbb{G}_h)$ the scheme
\begin{align}       \label{eq: discr v}
d v^h_t&= \left( (L^h_t+I^h) v^h_t + f_t \right) dt  \\   \label{eq: discr  v init con}
v^h_0&= \psi.
\end{align}
\begin{remark}
For $l\geq 1$ we have the continuous  embedding $H^l \hookrightarrow l_2(\mathbb{G}_h)$ (see \cite{IG}). Therefore under Assumption \ref{as: bounded coef} we have
$$
\|\phi\|^2_{l_2(\mathbb{G}_h)}+\int_0^T \|f_t\|^2_{l_2(\mathbb{G}_h)} dt < \infty.
$$
Under the same assumption it is easy to see that $L_t^h+J$ is a bounded linear operator on $l_2(\mathbb{G}_h)$ into itself (with norm bounded by a constant uniformly in $t \in[0,T]$).  Hence under Assumption \ref{as: bounded coef}, \eqref{eq: discr v}-\eqref{eq: discr  v init con} has a unique solution, that is,  a continuous function $v:[0,T] \to l_2(\mathbb{G}_h)$ such that for all $t \in [0,T]$
$$
v^h_t=\phi+\int_0^t (L_t^h+J)v^h_s+f_s \ ds,
$$
where the equality is understood in $l_2(\mathbb{G}_h)$ (hence,  also for all $x \in \mathbb{G}_h$). 
\end{remark}

Next is our main result concerning the spatial approximation.
\begin{theorem}                 \label{thm: main theorem space}
Let Assumptions \ref{as: bounded coef} and \ref{as: elipticity} hold with  $m\geq 4$. Let $u$ and $v^h$ be the unique solutions of \eqref{eq: main equation}-\eqref{eq: main equation initial condition} and \eqref{eq: discr v}-\eqref{eq: discr  v init con} respectively.  The following estimate holds,
$$
\sup_{t \in [0,T]} \sup_{x \in \mathbb{G}_h} |u_t(x)-v^h_t(x)|^2 + \sup_{t\in [0,T]}\|u_t-v^h_t\|^2_{l_2(\mathbb{G}_h)}  \leq N h^2 \mathcal{K}_m^2,
$$
where $N$ is a constant depending only on $m, K, \mu_0, \mu_2$ and $T$. 
\end{theorem}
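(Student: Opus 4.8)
The plan is to follow the approach of \cite{HY}: for each $h\in\mathfrak{N}$ introduce an auxiliary equation posed on all of $\bR$ but driven by the \emph{discretised} operators, solve it in the Sobolev spaces $H^l$, and transfer the error estimate to the grid via embedding theorems. Since under Assumption \ref{as: bounded coef} the operator $L^h_t+J^h$ is, for fixed $h$, a bounded linear operator on $H^l$ (measurable in $t$, with norm depending on $h$), the equation $du^h_t=[(L^h_t+J^h)u^h_t+f_t]\,dt$, $u^h_0=\psi$, has a unique solution $u^h\in C([0,T];H^l)$ for each $l\le m$. Because $\delta^h$, $\delta_{\pm h}$ and the translations in $J^h_2$ shift only by integer multiples of $h$, the operators $L^h_t$ and $J^h$ act locally on $\mathbb{G}_h$; evaluating the integral identity for $u^h$ at points of $\mathbb{G}_h$ (legitimate for $l\ge 1$, point evaluation being continuous on $H^l$) shows that $u^h|_{\mathbb{G}_h}$ solves \eqref{eq: discr v}--\eqref{eq: discr  v init con}, whence $v^h_t=u^h_t|_{\mathbb{G}_h}$ by the uniqueness recorded above and the embedding $H^l\hookrightarrow l_2(\mathbb{G}_h)$. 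It therefore suffices to bound $u-u^h$ in $H^1$, since the one-dimensional embeddings $H^1\hookrightarrow C^{0,1/2}\hookrightarrow L_\infty$ and $H^1\hookrightarrow l_2(\mathbb{G}_h)$ (the latter with $h$-independent constant, by \cite{IG}) then dominate both terms on the left-hand side.

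\emph{Stability.} The first step is to prove that there is $N=N(K)$ with $((L^h_t+J^h)\phi,\phi)_{H^1}\le N\|\phi\|_{H^1}^2$ for all $\phi\in H^1$ and $t\in[0,T]$. For $J^h$ this is immediate once one knows $J^h$ is negative semi-definite on $L_2$ (a property of the particular operator used here): since $J^h$ commutes with $\D_x$, $(J^h\phi,\phi)_{H^1}=(J^h\phi,\phi)_{L_2}+(J^h\D_x\phi,\D_x\phi)_{L_2}\le 0$. For $L^h_t$ one invokes the energy estimates of \cite{IG} for the operators $\delta^h$ and $\delta^h\delta^h$; the subtlety is that no lower bound on $a_t$ is assumed, so one cannot absorb the $\|\delta^h\delta^h\phi\|$-type terms that appear when $\D_x$ falls on $a_t$ — instead these are removed by the discrete summation-by-parts cancellation underlying the \cite{IG} estimates, mirroring the identity $(\D_x(a\D_x^2\phi),\D_x\phi)_{L_2}=-(a\D_x^2\phi,\D_x^2\phi)_{L_2}\le 0$. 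Applied to $u^h$ itself this also gives $\sup_t\|u^h_t\|_{H^1}^2\le N\mathcal{K}_m^2$, uniformly in $h$.

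\emph{Consistency.} Next, with $g^h_t:=(L_t+J)u_t-(L^h_t+J^h)u_t$, I would prove $\|g^h_t\|_{H^1}\le Nh\|u_t\|_{H^m}$ with $N=N(K,\mu_0,\mu_2)$. Taylor's formula gives $\|(\delta^h\delta^h-\D_x^2)\psi\|_{L_2}\le Nh\|\D_x^3\psi\|_{L_2}$ and $\|(\delta^h-\D_x)\psi\|_{L_2}\le Nh\|\D_x^2\psi\|_{L_2}$, and since $\delta^h,\delta^h\delta^h$ commute with $\D_x$, the coefficients $a_t,b_t$ and their derivatives are bounded, and $u_t\in H^m$ with $m\ge 4$, this yields $\|(L_t-L^h_t)u_t\|_{H^1}\le Nh\|u_t\|_{H^4}$. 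For the large jumps, every $z\in B^h_k$ lies within $h$ of $hk$ and $\sum_{k\in\bB_h}\nu(B^h_k)=\mu_0$, so the $H^1$-distance between $\int_{|z|>1}(u_t(\cdot+z)-u_t)\,\nu(dz)$ and $J^h_2 u_t$ is at most $N\mu_0 h\|u_t\|_{H^2}$. The main point is the small jumps: using $\phi(x+z)-\phi(x)-z\D_x\phi(x)=\int_0^1(1-\theta)z^2\D_x^2\phi(x+\theta z)\,d\theta$ for $z\in B^h_k$ with $k\in\bA_h$, and successively replacing $z$ by $s_k|k|h$ inside $\D_x^2\phi$, then $\theta|k|$ by $l$ on $[l/|k|,(l+1)/|k|]$, then $\D_x^2\phi$ by $\delta_{-h}\delta_h\phi$, one recovers exactly $J^h_1\phi$, each replacement costing at most $Nh\|\D_x^3\phi\|_{L_2}\,\zeta^h_k$; summing over $\bA_h$ with $\sum_{k\in\bA_h}\zeta^h_k=\mu_2<\infty$ (and carrying one more derivative for the $H^1$-norm, which is where $m\ge 4$ is used) gives the bound for the small-jump part of $(J-J^h)u_t$. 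The decisive feature is that only $\mu_0$ and $\mu_2$ enter: no negative moments of $\nu$ and no quantity of the type $\nu(\{|z|\ge\delta\})$, which is why no assumption on $\nu$ is needed.

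\emph{Conclusion and main obstacle.} Then $e^h:=u-u^h$ solves $de^h_t=[(L^h_t+J^h)e^h_t+g^h_t]\,dt$, $e^h_0=0$, with equality in $H^1$ (the regularity of $u$ coming from the well-posedness theorem). Testing against $e^h_t$ in $H^1$ and using Stability, $\tfrac{d}{dt}\|e^h_t\|_{H^1}^2\le N\|e^h_t\|_{H^1}^2+\|g^h_t\|_{H^1}^2$; Gronwall's inequality, the consistency bound, and $\sup_t\|u_t\|_{H^m}^2\le N\mathcal{K}_m^2$ give $\sup_{t\le T}\|u_t-u^h_t\|_{H^1}^2\le Nh^2\mathcal{K}_m^2$, and the embeddings of the first paragraph together with $v^h_t=u^h_t|_{\mathbb{G}_h}$ finish the argument. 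I expect the hard part to be the consistency estimate for $J^h_1$: one must push through the three-fold discretisation of the small-jump operator while matching exactly the weights $\zeta^h_k$ and $\theta^l_k$ of its definition and keeping every constant expressed solely through $\mu_2$, so that the bound survives for an arbitrary Lévy measure. A secondary delicate point is the $H^1$-stability, where the naive $L_2$ energy computation fails for the degenerate second-order term and one must rely on the finer cancellations provided by \cite{IG}.
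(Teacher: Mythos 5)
Your proposal follows essentially the same route as the paper: introduce the semi-discrete equation $du^h_t=[(L^h_t+J^h)u^h_t+f_t]\,dt$ on $\bR$, prove coercivity in Sobolev norms via the negative semi-definiteness of $J^h$ (the paper's Lemma \ref{lem: non positive operator}) together with the estimates of \cite{IG} for the degenerate differential part, prove $O(h)$ consistency of $L^h_t$ and $J^h$ in Sobolev norm (with constants depending only on $\mu_0,\mu_2$), apply Gronwall to $u-u^h$, and transfer to the grid by identifying $v^h$ with the restriction of $u^h$ via uniqueness and the embeddings $H^1\hookrightarrow C^{0,1/2}$ and $H^1\hookrightarrow l_2(\mathbb{G}_h)$. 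The only substantive difference is cosmetic (you work in $H^1$ rather than $H^{m-3}$, which coincide for $m=4$), and the one claim you assert without computation — that $(J^h_1\phi,\phi)_{L_2}\le 0$ — is true but requires the explicit telescoping identity carried out in the paper's Lemma \ref{lem: non positive operator}.
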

We now move to the temporal discretization. 
Let $n\geq 1$ be an integer and let $\tau=T/n$. 
In $l_2(\mathbb{G}_h)$ we consider the implicit scheme
\begin{align}     \label{eq: space time dis v}
v_i&=v_{i-1}+ \tau [(L^h_{i\tau}+J^h)v_i +f_{i\tau} ] , \  i=1,...,n \\           \label{eq: space time dis init v}
v_0&= \psi.
\end{align}
\begin{theorem}                   \label{thm: existence uniqueness l2}
Let Assumptions \ref{as: bounded coef} and \ref{as: elipticity} hold.   There exists a constant $N_0$ depending only on $K$ and $T$, such that  for any $h \in \mathfrak{N}$,  if $n > N_0$, then \eqref{eq: space time dis v}-\eqref{eq: space time dis init v} has a unique solution $(v^{h,\tau})_{i=0}^n$. 
\end{theorem}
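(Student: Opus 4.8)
The plan is to turn the one-step recursion into a sequence of operator equations in the real Hilbert space $l_2(\mathbb{G}_h)$ and to solve each one by a coercivity (Lax--Milgram) argument. Fix $h\in\mathfrak{N}$ and $n\geq1$, put $\tau=T/n$, and rewrite \eqref{eq: space time dis v} as $B_i v_i=v_{i-1}+\tau f_{i\tau}$, where $B_i:=I-\tau(L^h_{i\tau}+J^h)$. By the Remark preceding the statement, $L^h_{i\tau}+J^h$ is, for fixed $h$, a bounded linear operator on $l_2(\mathbb{G}_h)$, so each $B_i$ is bounded, and $\psi\in H^m\hookrightarrow l_2(\mathbb{G}_h)$. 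Hence, arguing by induction on $i$ with base case $v_0=\psi$, it suffices to produce a threshold $N_0=N_0(K,T)$ such that for $n>N_0$ each $B_i$ is a bijection of $l_2(\mathbb{G}_h)$ onto itself; then $v_i:=B_i^{-1}(v_{i-1}+\tau f_{i\tau})$ is forced and unique, which gives both existence and uniqueness of $(v^{h,\tau}_i)_{i=0}^n$.

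The analytic core is the one-sided bound
\[
(L^h_t\phi+J^h\phi,\phi)_{l_2(\mathbb{G}_h)}\leq N_1\,\|\phi\|^2_{l_2(\mathbb{G}_h)},\qquad\phi\in l_2(\mathbb{G}_h),
\]
with $N_1$ depending only on $K$, uniformly in $t\in[0,T]$ and $h\in\mathfrak{N}$. I would establish it in two independent pieces. \emph{(a)~$J^h$ is negative semi-definite, i.e.\ $(J^h\phi,\phi)_{l_2(\mathbb{G}_h)}\leq 0$.} For $J^h_2$ this is immediate: each summand equals $\nu(B^h_k)\geq 0$ times $(\phi(\cdot+hk)-\phi,\phi)_{l_2(\mathbb{G}_h)}\leq 0$, the last inequality by Cauchy--Schwarz and translation invariance of $\|\cdot\|_{l_2(\mathbb{G}_h)}$. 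For $J^h_1$, discrete summation by parts ($\delta_h^{\ast}=-\delta_{-h}$, hence $(\delta_{-h}\delta_h\phi,g)_{l_2(\mathbb{G}_h)}=-(\delta_h\phi,\delta_h g)_{l_2(\mathbb{G}_h)}$), together with the fact that translations by multiples of $h$ commute with $\delta_h$, reduces $(J^h_1\phi,\phi)_{l_2(\mathbb{G}_h)}$ to $-\sum_{k\in\bA_h}\zeta^h_k\sum_{l=0}^{|k|-1}\theta^l_k\,(\psi(\cdot+s_klh),\psi)_{l_2(\mathbb{G}_h)}$ with $\psi:=\delta_h\phi$; by Plancherel on $l_2(\mathbb{G}_h)$ and evenness of the cosine this is a positive multiple of $-\int_{|\xi|\leq\pi/h}|\widehat\psi(\xi)|^2\bigl(\sum_{k\in\bA_h}\zeta^h_k\sum_{l=0}^{|k|-1}\theta^l_k\cos(lh\xi)\bigr)\,d\xi$, so it is enough to check the scalar inequality $\sum_{l=0}^{N-1}\theta^l_k\cos(l\alpha)\geq 0$ for all $\alpha$ and all $N=|k|$. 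Using $\theta^l_k=(2(N-l)-1)/(2N^2)$ one gets $2N^2\sum_{l=0}^{N-1}\theta^l_k\cos(l\alpha)=\bigl|\sum_{l=0}^{N-1}e^{il\alpha}\bigr|^2+N-\mathrm{Re}\sum_{l=0}^{N-1}e^{il\alpha}\geq 0$, since the first term is nonnegative and $\mathrm{Re}\sum_{l=0}^{N-1}e^{il\alpha}\leq\bigl|\sum_{l=0}^{N-1}e^{il\alpha}\bigr|\leq N$. \emph{(b)~$L^h_t$ satisfies a Gårding bound} $(L^h_t\phi,\phi)_{l_2(\mathbb{G}_h)}\leq N(K)\|\phi\|^2_{l_2(\mathbb{G}_h)}$, uniformly in $h$ and $t$: this is the discrete analogue of the classical estimate for a possibly degenerate second order operator, obtained from discrete summation by parts, the nonnegativity of $a_t$ together with the boundedness of its first two spatial derivatives (the discrete form of $(\D_x a_t)^2\leq 2\|\D_x^2 a_t\|_\infty a_t$), and Young's inequality to absorb the first order term into the nonnegative second order contribution; such estimates for the difference operators are available in \cite{IG}.

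Granting the bound, set $N_0:=2N_1T$. If $n>N_0$ then $\tau N_1<1/2$, so for every $i$ and every $\phi\in l_2(\mathbb{G}_h)$,
\[
(B_i\phi,\phi)_{l_2(\mathbb{G}_h)}=\|\phi\|^2_{l_2(\mathbb{G}_h)}-\tau(L^h_{i\tau}\phi+J^h\phi,\phi)_{l_2(\mathbb{G}_h)}\geq(1-\tau N_1)\|\phi\|^2_{l_2(\mathbb{G}_h)}\geq\tfrac12\|\phi\|^2_{l_2(\mathbb{G}_h)}.
\]
Since $l_2(\mathbb{G}_h)$ is a real Hilbert space, $(B_i^{\ast}\phi,\phi)_{l_2(\mathbb{G}_h)}=(B_i\phi,\phi)_{l_2(\mathbb{G}_h)}$, so the same lower bound holds for $B_i^{\ast}$. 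Hence $B_i$ is injective with closed range (from $\|B_i\phi\|_{l_2(\mathbb{G}_h)}\geq\tfrac12\|\phi\|_{l_2(\mathbb{G}_h)}$) and $B_i^{\ast}$ is injective, whence $B_i$ is onto; equivalently, invoke the Lax--Milgram theorem. This closes the induction, and the threshold $N_0=2N_1T$ depends only on $K$ and $T$, as required.

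I expect the main obstacle to be part \emph{(a)}: negative semi-definiteness of $J^h_1$ fails term by term, since a shifted discrete Laplacian $\phi\mapsto\delta_{-h}\delta_h\phi(\cdot+s_klh)$ is not negative semi-definite, so one must exploit the precise weights $\theta^l_k$ — they are exactly the discretization of the nonnegative-real-part kernel $(1-\theta)e^{i\theta\beta}$ responsible for the dissipativity of the continuous operator $J$ — and, in part \emph{(b)}, the point that the Gårding constant must stay uniform in $h$ despite the degeneracy of $a_t$. Once $(J^h\phi,\phi)_{l_2(\mathbb{G}_h)}\leq 0$ and $N_1=N_1(K)$ are secured, the Lax--Milgram step and the induction on $i$ are routine.
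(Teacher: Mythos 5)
Your proposal is correct, and its overall architecture coincides with the paper's: rewrite the step as $(I-\tau(L^h_{i\tau}+J^h))v_i=v_{i-1}+\tau f_{i\tau}$, establish the one-sided bound $((L^h_t+J^h)\phi,\phi)_{l_2(\mathbb{G}_h)}\le N(K)\|\phi\|^2_{l_2(\mathbb{G}_h)}$ (this is exactly Lemma \ref{lem: coerc l2}), deduce coercivity of the operator for $\tau$ below a threshold depending only on $K$ and $T$, and invoke a Lax--Milgram-type solvability lemma together with induction on $i$. The genuine difference is in how you prove $(J^h_1\phi,\phi)_{l_2(\mathbb{G}_h)}\le 0$, which is the paper's key structural ingredient (Lemma \ref{lem: non positive operator}). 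The paper sums the telescoping second differences explicitly, observing that the linear-in-$l$ weights $\theta^l_k$ kill all interior terms and leave a four-term expression $\tfrac{1}{2k^2h^2}\bigl(\phi(x+kh)+\phi(x+(k-1)h)+(2k-1)\phi(x-h)-(2k+1)\phi(x)\bigr)$ whose coefficients are nonnegative except at $\phi(x)$ and sum to zero, so H\"older finishes. You instead sum by parts ($\delta_h^{*}=-\delta_{-h}$), pass to the discrete Fourier side, and reduce to the scalar inequality $\sum_{l=0}^{N-1}\theta^l_k\cos(l\alpha)\ge 0$, which you verify via the Fej\'er-kernel identity; I checked the identity $2N^2\sum_l\theta^l_k\cos(l\alpha)=|D_N(\alpha)|^2+N-\mathrm{Re}\,D_N(\alpha)$ with $D_N(\alpha)=\sum_{l=0}^{N-1}e^{il\alpha}$, and both terms are indeed nonnegative. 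The paper's computation is more elementary and self-contained; your Fourier route is conceptually more transparent (it exhibits the weights $\theta^l_k$ as a discretization of $1-\theta$, whose cosine series is the nonnegative Fej\'er kernel, which is precisely the dissipativity mechanism of the continuous operator $J$) and would adapt more easily to other quadratures. A further minor structural difference: the paper first proves solvability in $H^k$ (Theorem \ref{thm: existence uniqueness L2}) and then repeats the argument in $l_2(\mathbb{G}_h)$, whereas you work directly in $l_2(\mathbb{G}_h)$, which is all the statement requires; your part (b) defers the discrete G\r{a}rding bound for $L^h_t$ to the literature exactly as the paper defers it to Gy\"ongy's work.
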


\begin{assumption}                  \label{as: continuity in time}
Let $l\geq 0 $ be an integer. There exist constants $C$ and $\gamma>0$  such that 
$$
|\D^j_x a_t(x)-\D^j_xa_s(x)|^2+|\D^j_xb_t(x)-\D^j_xb_s(x)|^2+|\D^j_xc_t(x)-\D^j_xc_s(x)|^2\leq C|t-s|^\gamma
$$
and 
$$
 \ \|f_t-f_s\|^2_{H^l}\leq C |t-s|^\gamma
$$
for all $x\in \bR$, $t,s \in [0,T]$, and $0\leq j \leq l$. 
\end{assumption}

\begin{assumption}                 \label{as: boundedness f}
There exists a constant $K'$, such that for all $t \in [0, T]$ we have $\|f_t\|_{H^{m-2}}^2 \leq K'$. 
\end{assumption}

Next is our result concerning the temporal approximation. 

\begin{theorem}                     \label{thm: main theorem}
Let Assumptions \ref{as: bounded coef}, \ref{as: elipticity} and \ref{as: boundedness f} hold 
with $m \geq 4$ and let Assumption \ref{as: continuity in time} hold with $l\geq 1$. 
 Let $(v^h_t)_{t\in[0,T]}$ and $(v^{h,\tau}_i)_{ i=0}^n$ be the unique solutions of equations \eqref{eq: discr v}-\eqref{eq: discr  v init con} and \eqref{eq: space time dis v}-\eqref{eq: space time dis init v} respectively (for $n>N_0$). 
There exists a constant $N_0'$ such that if $n>N_0'$,  then:

\begin{itemize}
\item[(i)]  the following estimate holds,
\begin{equation}      \nonumber
\max_{i \leq n} \sup_{x \in \mathbb{G}_h} |v^h_{i\tau}(x)-v^{h,\tau}_i(x)|^2  + \max_{i \leq n}\| v^h_{i\tau}-v^{h,\tau}_i\|^2_{l_2(\mathbb{G}_h)}  \leq \tau ^{1\wedge \gamma}  N (K'+\mathcal{K}^2_m)
\end{equation} 

\item[(ii)] if moreover $m \geq 5$,  then
\begin{equation}                    \nonumber           
\max_{i \leq n}  \sup_{x\in \mathbb{G}_h}|v^h_{i\tau}(x)-v^{h,\tau}_i(x)|^2+\max_{i \leq n}\| v^h_{i\tau}-v^{h,\tau}_i\|^2_{l_2(\mathbb{G}_h)} \leq \tau^{2\wedge \gamma} N (K'+\mathcal{K}^2_m),   
\end{equation}
\end{itemize}
where $N$ is a constant depending only on $K, C$, $T$, $m$, $\mu_0$ and $\mu_2$.  
\end{theorem}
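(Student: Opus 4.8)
The plan is to run an implicit‑Euler error analysis, but only after lifting the problem from the grid $\mathbb G_h$ to Sobolev spaces over $\bR$, where the uniform a priori estimates are available. Alongside $v^h$ and $v^{h,\tau}$ I would introduce $u^h$, the $H^m$‑valued solution of the semi‑discrete equation \eqref{eq: discr v}, and $u^{h,\tau}$, the $H^m$‑valued solution of the recursion \eqref{eq: space time dis v} (the former by Picard's theorem, since $L^h_t+J^h$ is a bounded operator on $H^m$ for fixed $h$; the latter for $n>N_0$, by the $H^m$‑analogue of Theorem \ref{thm: existence uniqueness l2}). Because $L^h_t$ and $J^h$ couple a point only with points differing from it by a multiple of $h$, the restrictions of $u^h$ and $u^{h,\tau}$ to $\mathbb G_h$ solve \eqref{eq: discr v} and \eqref{eq: space time dis v} on the grid, hence equal $v^h$ and $v^{h,\tau}$. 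By the embeddings $H^1\hookrightarrow C^{0,1/2}$ and $H^1\hookrightarrow l_2(\mathbb G_h)$ (the latter from \cite{IG}) it therefore suffices to bound $\max_{i\le n}\|u^h_{i\tau}-u^{h,\tau}_i\|_{H^1}^2$ by $\tau^{1\wedge\gamma}N(K'+\mathcal K_m^2)$ in case (i), and by the same quantity with exponent $2\wedge\gamma$ in case (ii).

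The analytic input is the uniform stability machinery underlying Theorems \ref{thm: existence uniqueness l2} and \ref{thm: main theorem space}: from the negative semi‑definiteness of $J^h$ and the discrete difference‑operator estimates of \cite{IG} one has, uniformly in $h\in\mathfrak N$ and $t\in[0,T]$, the G{\aa}rding‑type bound $2((L^h_t+J^h)\phi,\phi)_l\le N\|\phi\|_{H^l}^2$ for $\phi\in H^{l+2}$, the semi‑discrete bound $\sup_{t\le T}\|u^h_t\|_{H^m}^2\le N\mathcal K_m^2$, and the fact that $L^h_t+J^h$ maps $H^{s}$ into $H^{s-2}$ with norm bounded uniformly in $h$. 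Writing $g^h_t:=(L^h_t+J^h)u^h_t+f_t=\partial_t u^h_t$, Assumption \ref{as: boundedness f} then yields that $u^h$ is Lipschitz in time as an $H^{m-2}$‑valued map, and Assumption \ref{as: continuity in time} yields that $g^h$ is H\"older continuous in time in $H^l$ --- of exponent $\gamma/2$ from the time‑variation of the coefficients of $L^h$ and of $f$, and of exponent $1$ from that of $u^h$.

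Setting $e_i:=u^h_{i\tau}-u^{h,\tau}_i$, subtracting \eqref{eq: space time dis v} from the integrated form of \eqref{eq: discr v} gives $e_0=0$ and
\[
e_i-e_{i-1}=\tau(L^h_{i\tau}+J^h)e_i+r_i,\qquad
r_i=\int_{(i-1)\tau}^{i\tau}\big(g^h_s-g^h_{i\tau}\big)\,ds .
\]
Decomposing $g^h_s-g^h_{i\tau}$ into the contributions of $(L^h_s-L^h_{i\tau})u^h_s$, of $(L^h_{i\tau}+J^h)(u^h_s-u^h_{i\tau})$ and of $f_s-f_{i\tau}$, and estimating each in $H^1$ by the regularity recorded above --- for the middle term interpolating $\|u^h_s-u^h_{i\tau}\|_{H^3}\le\|u^h_s-u^h_{i\tau}\|_{H^2}^{1/2}\|u^h_s-u^h_{i\tau}\|_{H^4}^{1/2}$ under the assumptions of (i), and using instead that $u^h$ is Lipschitz as an $H^3$‑valued map when $m\ge5$ --- leads to $\|r_i\|_{H^1}\le N(\mathcal K_m+\sqrt{K'})\,\tau^{1+\beta}$ with $\beta=\tfrac12\wedge\tfrac{\gamma}{2}$ in case (i) and $\beta=1\wedge\tfrac{\gamma}{2}$ in case (ii). Taking the $H^1$ inner product of the error equation with $e_i$, using $2(e_i-e_{i-1},e_i)_1\ge\|e_i\|_{H^1}^2-\|e_{i-1}\|_{H^1}^2$, the G{\aa}rding bound (legitimate, and the implicit step invertible, once $n$ exceeds a constant $N_0'$) and $2(r_i,e_i)_1\le2\|r_i\|_{H^1}\|e_i\|_{H^1}$, then summing over $i\le k$ and applying the discrete Gr\"onwall lemma to absorb $N\tau\sum_{i\le k}\|e_i\|_{H^1}^2$, I obtain $\max_{i\le n}\|e_i\|_{H^1}\le N\sum_{i\le n}\|r_i\|_{H^1}\le Nn\tau^{1+\beta}(\mathcal K_m+\sqrt{K'})=NT\tau^{\beta}(\mathcal K_m+\sqrt{K'})$, hence $\max_{i\le n}\|e_i\|_{H^1}^2\le N\tau^{2\beta}(K'+\mathcal K_m^2)$, i.e. the exponents $1\wedge\gamma$ in (i) and $2\wedge\gamma$ in (ii). The two embeddings then deliver the asserted grid estimates.

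The main obstacle is not the Euler recursion but the uniform‑in‑$h$ a priori and G{\aa}rding estimates for the degenerate discrete operator $L^h_t+J^h$ under Assumption \ref{as: elipticity} alone ($a\ge0$): this is precisely where the negative semi‑definiteness of $J^h$ and the delicate discrete commutator estimates of \cite{IG} are indispensable, and where a naive treatment fails --- though this part is essentially inherited from the earlier results. The remaining care is bookkeeping in the consistency estimate: matching the time‑regularity of $u^h$ that Assumptions \ref{as: continuity in time} and \ref{as: boundedness f} actually supply against the spatial smoothness consumed by running the energy argument in $H^1$, which is what fixes the thresholds $m\ge4$ for (i) and the extra order $m\ge5$ for (ii).
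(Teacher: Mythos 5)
Your proposal is correct and follows essentially the same route as the paper: lift the scheme to Sobolev spaces over $\bR$, identify the grid restrictions of $u^h$ and $u^{h,\tau}$ with $v^h$ and $v^{h,\tau}$, run the implicit-Euler energy argument using the G{\aa}rding bound from the negative semi-definiteness of $J^h$ together with the time-regularity of $u^h$ supplied by Assumptions \ref{as: continuity in time} and \ref{as: boundedness f}, and transfer back via the embeddings $H^1\hookrightarrow C^{0,1/2}$ and $H^1\hookrightarrow l_2(\mathbb{G}_h)$. The only (cosmetic) differences are that you work at the fixed level $H^1$ rather than $H^{m-3}$/$H^{m-4}$, and you obtain $\|u^h_t-u^h_s\|_{H^{m-1}}^2\leq N|t-s|(\mathcal{K}_m^2+K')$ by interpolating between the Lipschitz bound in $H^{m-2}$ and the uniform bound in $H^m$, where the paper uses the pairing inequality $|(\phi',\phi)_{m-1}|\leq\|\phi'\|_{H^{m-1}}\|\phi\|_{H^{m-3}}$ to the same effect.
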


A direct consequence of the theorem above is the following:

\begin{theorem}
Under the assumptions of Theorem \ref{thm: main theorem}, for all $n>N_0'$ and all $h \in \mathfrak{N}$, 
\begin{itemize}
\item[(i)]  the following estimate holds,
\begin{equation}      \nonumber
\max_{i \leq n} \sup_{x \in \mathbb{G}_h} |u_{i\tau}(x)-v^{h,\tau}_i(x)|^2  + \max_{i \leq n}\| u_{i\tau}-v^{h,\tau}_i\|^2_{l_2(\mathbb{G}_h)}  \leq N(h^2+\tau ^{1\wedge \gamma} )  \mathcal{N}^2_m
\end{equation} 

\item[(ii)] if moreover $m \geq 5$,  then
\begin{equation}                    \nonumber           
\max_{i \leq n}  \sup_{x\in \mathbb{G}_h}|u_{i\tau}(x)-v^{h,\tau}_i(x)|^2+\max_{i \leq n}\| u_{i\tau}-v^{h,\tau}_i\|^2_{l_2(\mathbb{G}_h)} \leq N(h^2+\tau ^{2\wedge \gamma} )  \mathcal{N}^2_m,
\end{equation}
\end{itemize}
where $\mathcal{N}^2_m=K'+\mathcal{K}^2_m$, and $N$ is a constant depending only on $K, C$, $T$, $m$, $ \mu_0$ and $\mu_2$.  
\end{theorem}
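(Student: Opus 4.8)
The plan is to derive the estimate by splitting the total error $u_{i\tau}-v^{h,\tau}_i$ into the purely spatial error $u_{i\tau}-v^h_{i\tau}$ and the purely temporal error $v^h_{i\tau}-v^{h,\tau}_i$, controlling the first by Theorem \ref{thm: main theorem space} and the second by Theorem \ref{thm: main theorem}, and then combining them by the elementary inequality $|\alpha+\beta|^2\leq 2|\alpha|^2+2|\beta|^2$ (and the triangle inequality in $l_2(\mathbb{G}_h)$ followed by the same quadratic inequality).

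First I would observe that the hypotheses here are exactly those of Theorem \ref{thm: main theorem}, which include Assumptions \ref{as: bounded coef} and \ref{as: elipticity} with $m\geq 4$; hence Theorem \ref{thm: main theorem space} applies and, since $\mathcal{K}_m^2\leq \mathcal{N}_m^2=K'+\mathcal{K}_m^2$, gives
$$
\max_{i\leq n}\sup_{x\in\mathbb{G}_h}|u_{i\tau}(x)-v^h_{i\tau}(x)|^2+\max_{i\leq n}\|u_{i\tau}-v^h_{i\tau}\|^2_{l_2(\mathbb{G}_h)}\leq Nh^2\mathcal{N}_m^2.
$$
For the temporal part, choosing $n>N_0'$ (which we may also take $\geq N_0$, so that $v^{h,\tau}$ is well defined by Theorem \ref{thm: existence uniqueness l2}), Theorem \ref{thm: main theorem}(i) yields
$$
\max_{i\leq n}\sup_{x\in\mathbb{G}_h}|v^h_{i\tau}(x)-v^{h,\tau}_i(x)|^2+\max_{i\leq n}\|v^h_{i\tau}-v^{h,\tau}_i\|^2_{l_2(\mathbb{G}_h)}\leq \tau^{1\wedge\gamma}N\mathcal{N}_m^2,
$$
and, when in addition $m\geq 5$, part (ii) gives the same bound with $\tau^{1\wedge\gamma}$ replaced by $\tau^{2\wedge\gamma}$.

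Then, for each $i\leq n$ and $x\in\mathbb{G}_h$, I would write $u_{i\tau}(x)-v^{h,\tau}_i(x)=(u_{i\tau}(x)-v^h_{i\tau}(x))+(v^h_{i\tau}(x)-v^{h,\tau}_i(x))$, square, apply $|\alpha+\beta|^2\leq 2|\alpha|^2+2|\beta|^2$, and take the supremum over $x$ and the maximum over $i$; the $l_2(\mathbb{G}_h)$ summand is handled identically, using the triangle inequality for the norm and then $(p+q)^2\leq 2p^2+2q^2$. Adding the two displayed bounds, absorbing the factor $2$ into $N$, gives the claimed estimate in case (i), and repeating the computation with Theorem \ref{thm: main theorem}(ii) in place of (i) gives case (ii).

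There is no real obstacle: the statement is a bookkeeping corollary of the two main theorems. The only points worth recording are that the regularity demanded by Theorem \ref{thm: main theorem space} ($m\geq 4$, resp.\ $m\geq 5$ for (ii)) is already part of the hypotheses, that the threshold on $n$ can be enlarged to ensure well-posedness of the fully discrete scheme, and that $\mathcal{K}_m^2\leq \mathcal{N}_m^2$ lets both error contributions be expressed through the single constant $\mathcal{N}_m^2=K'+\mathcal{K}_m^2$.
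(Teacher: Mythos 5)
Your proposal is correct and is precisely the argument the paper intends: the theorem is stated as a direct consequence of Theorem \ref{thm: main theorem space} and Theorem \ref{thm: main theorem}, obtained by the triangle-inequality splitting $u_{i\tau}-v^{h,\tau}_i=(u_{i\tau}-v^h_{i\tau})+(v^h_{i\tau}-v^{h,\tau}_i)$ together with $|\alpha+\beta|^2\leq 2|\alpha|^2+2|\beta|^2$ and the observation $\mathcal{K}_m^2\leq\mathcal{N}_m^2$.
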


\section{Auxiliary Facts}     \label{Auxiliary}
In this section we prove some results that will be used in order to prove the main theorems.
\begin{lemma}                    \label{lem: non positive operator}
For any integer $l\geq 0 $, and for any $\phi \in H^l$,  we have 
$$
(\D^j_x J^h \phi, \D^j_x \phi ) \leq 0,
$$
for all integers $j \in \{0,...,l\}$. 
\end{lemma}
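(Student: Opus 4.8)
\emph{The plan} is to reduce the statement to the negative semi‑definiteness of $J^h$ on $L_2(\bR)$, and then to extract a ``sum of squares'' from the quadratic form of $J^h_1$. The operators $\delta_{\pm h}$ and the shifts $S_a\colon \phi\mapsto\phi(\cdot+a)$ are all built from translations, hence commute with one another and with $\D_x$; since the coefficients $\zeta^h_k$, $\theta^l_k$, $\nu(B^h_k)$ entering $J^h$ do not depend on $x$ and satisfy $\sum_{k\in\bA_h}\zeta^h_k=\mu_2<\infty$ and $\sum_{k\in\bB_h}\nu(B^h_k)\le\mu_0<\infty$, the operator $J^h$ is bounded on $L_2(\bR)$ and on every $H^j$, and commutes with $\D^j_x$. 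Therefore $(\D^j_xJ^h\phi,\D^j_x\phi)=(J^h\D^j_x\phi,\D^j_x\phi)$, and it suffices to prove that $(J^h\psi,\psi)\le0$ for every $\psi\in L_2(\bR)$. I would do this for $J^h_1$ and $J^h_2$ separately.

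\emph{The operator $J^h_2$.} Here $(J^h_2\psi,\psi)=\sum_{k\in\bB_h}\nu(B^h_k)\bigl((\psi(\cdot+hk),\psi)-\|\psi\|_{L_2}^2\bigr)$; the series is absolutely convergent, and each summand is $\le0$ because $\nu(B^h_k)\ge0$ and $(\psi(\cdot+hk),\psi)\le\|\psi\|_{L_2}^2$ by Cauchy--Schwarz.

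\emph{The operator $J^h_1$.} Translation invariance of Lebesgue measure gives $(\delta_{-h})^{*}=-\delta_h$; since shifts commute with $\delta_{\pm h}$, setting $g:=\delta_h\psi$ we get, for every $a$,
\[
\bigl(\delta_{-h}\delta_h\psi(\cdot+a),\psi\bigr)=\bigl(\delta_{-h}(g(\cdot+a)),\psi\bigr)=-\bigl(g(\cdot+a),g\bigr).
\]
Because $g$ is real, $(g(\cdot+a),g)=(g(\cdot-a),g)$, so with $n:=|k|$ and $s_k=\pm1$,
\[
(J^h_1\psi,\psi)=-\sum_{k\in\bA_h}\zeta^h_k\sum_{l=0}^{n-1}\theta^l_k\,(g(\cdot+lh),g),
\]
and, as $\zeta^h_k\ge0$, it remains to show the inner sum is $\ge0$ for each $n\ge1$. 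A direct computation gives $\theta^l_k=\dfrac{2(n-l)-1}{2n^2}$. Put $G:=\sum_{l=0}^{n-1}g(\cdot+lh)$. Expanding $\|G\|_{L_2}^2=\sum_{l,m=0}^{n-1}(g(\cdot+(l-m)h),g)$, collecting terms of equal lag and using that $g$ is real yields $\|G\|_{L_2}^2=n\|g\|_{L_2}^2+2\sum_{l=1}^{n-1}(n-l)(g(\cdot+lh),g)$, whence
\[
2n^2\sum_{l=0}^{n-1}\theta^l_k\,(g(\cdot+lh),g)=\sum_{l=0}^{n-1}\bigl(2(n-l)-1\bigr)(g(\cdot+lh),g)=\|G\|_{L_2}^2+n\|g\|_{L_2}^2-(G,g).
\]
By Cauchy--Schwarz and the triangle inequality, $(G,g)\le\|G\|_{L_2}\|g\|_{L_2}\le n\|g\|_{L_2}^2$, so the right-hand side is $\ge\|G\|_{L_2}^2\ge0$. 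Hence $(J^h_1\psi,\psi)\le0$, and combined with the previous step $(J^h\psi,\psi)\le0$, which by the reduction proves the lemma.

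\emph{Main obstacle.} The only non‑formal point is the positivity of the inner sum in the $J^h_1$ step: this is exactly where the particular choice of the weights $\theta^l_k$ (so that $2n^2\theta^l_k=2(n-l)-1$) and of the sampling points $s_klh$ is used — they are engineered so that, after one integration by parts, the quadratic form of $J^h_1$ collapses to the expression $\|G\|_{L_2}^2+n\|g\|_{L_2}^2-(G,g)$, whose nonnegativity is a one–line Cauchy--Schwarz estimate. Everything else (commutation with $\D_x$, absolute convergence of the series, the elementary computation of $\theta^l_k$) is routine bookkeeping.
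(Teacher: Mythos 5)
Your proof is correct, and for the $J^h_1$ part it takes a genuinely different route from the paper's. The paper computes the telescoping sum $\sum_{l=0}^{|k|-1}\theta^l_k\delta_{-h}\delta_h\phi(x+s_klh)$ explicitly and shows it collapses to a four--term difference operator of the form $\tfrac{1}{2k^2h^2}\bigl(\phi(x+kh)+\phi(x+(k-1)h)+(2|k|-1)\phi(x\mp h)-(2|k|+1)\phi(x)\bigr)$, i.e.\ nonnegative off--diagonal weights whose sum equals the diagonal weight; negativity of the quadratic form then follows from Cauchy--Schwarz exactly as in a discrete maximum principle. You instead perform a summation by parts, using $(\delta_{-h})^{*}=-\delta_h$, to rewrite the quadratic form in terms of the correlations $(g(\cdot+lh),g)$ of $g=\delta_h\psi$, and then recognize $\sum_l(2(n-l)-1)(g(\cdot+lh),g)=\|G\|_{L_2}^2+n\|g\|_{L_2}^2-(G,g)$ with $G=\sum_{l=0}^{n-1}g(\cdot+lh)$, which is nonnegative by Cauchy--Schwarz; I checked the identity and the bound $(G,g)\le n\|g\|_{L_2}^2$, and both are right. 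Your version is arguably more structural (it exhibits the quadratic form of $J^h_1$ as essentially a sum of squares in the first differences, mirroring the continuous identity $(J\phi,\varphi)=-\int\!\!\int(1-\theta)z^2(\D_x\phi(\cdot+\theta z),\D_x\varphi)\,d\theta\,\nu(dz)$ stated in Section~\ref{Formulation}), while the paper's explicit collapse has the side benefit that the pointwise identity transfers verbatim to $l_2(\mathbb{G}_h)$ in Lemma~\ref{lem: coerc l2}; your argument transfers there equally well, since the discrete adjoint relation holds on the grid. Both arguments use the affine form of the weights $2n^2\theta^l_k=2(n-l)-1$ in an essential way.
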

\begin{proof} 
Since $\D_x J^h \phi= J^h \D_x \phi$, it clearly suffices to show the conclusion with $l=j=0$.
We have
\begin{align*}
(J^h_2\phi(x), \phi(x)) &= \sum_{k \in \bB_h} \left((\phi(\cdot+hk), \phi)-\|\phi\|_{L_2}^2 \right) \nu(B^h_k) \\
&\leq  \sum_{k \in \bB_h} \left(\|\phi\|_{L_2}^2 -\|\phi\|_{L_2}^2 \right) \nu(B^h_k) =0,
\end{align*}
where the inequality is due to H\"older's inequality and the translation invariance of the Lebesgue measure. In order to show that $(J^h_1\phi , \phi) \leq 0$, 
clearly it suffices to show that for each $k \in \mathbb{A}_h$ 
\begin{equation}                       \label{eq: neg def}
\left( \sum_{l=0}^{|k|-1} \theta_k^l  \delta_{-h} \delta_h \phi(x+s_k l h ), \phi(x) \right) \leq 0.
\end{equation}
If $s_k=1$, then a simple calculation shows that
$$
\sum_{l=0}^{|k|-1} \theta_k^l  \delta_{-h} \delta_h \phi(x+s_k l h )
$$
\begin{align*}
&=\sum_{l=0}^{|k|-1} \frac{2|k|-(2l+1)}{2k^2h^2}\left[ \phi(x+(l-1)h)-2\phi(x+lh) + \phi(x+(l+1)h) \right] \\
&=\frac{1}{2k^2h^2}\left( \phi(x+kh)+\phi(x+(k-1)h)+(2k-1)\phi(x-h)-(2k+1)\phi(x) \right),
\end{align*}
which combined with H\"older's inequality  imply \eqref{eq: neg def}.  If $s_k=-1$, then 
$$
\sum_{l=0}^{|k|-1} \theta_k^l  \delta_{-h} \delta_h \phi(x+s_k l h )
$$
\begin{align*}
&=\sum_{l=0}^{|k|-1} \frac{2|k|-(2l+1)}{2k^2 h^2}\left[ \phi(x-(l+1)h)-2\phi(x-lh) + \phi(x-(l-1)h) \right]
\\
&=\frac{1}{2k^2h^2}\left( \phi(x+kh)+\phi(x+(k+1)h)+(2|k|-1)\phi(x+h)-(2|k|+1)\phi(x) \right),
\end{align*}
which again by virtue of H\"older's inequality implies \eqref{eq: neg def}.
\end{proof}
Lemma \ref{lem: non positive operator} combined with Lemma 3.4 from \cite{IG} imply the following: 

\begin{lemma}       \label{lem: coercivity}
Suppose Assumption \ref{as: bounded coef} (i) holds. Then for any  integer  $l \in \{0,..,m\}  $ and any $ \phi \in H^m$, we have
$$
(\D^l_x (L^h_t+J^h )\phi, \D^l_x \phi) \leq N \|\phi\|^2_{H^m},
$$
where $N$ is a constant depending only on $K$ and $m$. 
\end{lemma}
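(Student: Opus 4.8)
The plan is to reduce the estimate for $(L^h_t + J^h)$ to the two ingredients already assembled: the negative semidefiniteness of $J^h$ from Lemma \ref{lem: non positive operator}, and a corresponding bound for the purely differential part $L^h_t$ which is precisely Lemma 3.4 of \cite{IG}. Since $\D_x$ commutes with all the difference operators $\delta_h$, $\delta^h$, $\delta_{-h}\delta_h$ and with $J^h$, while the coefficients $a_t,b_t,c_t$ have derivatives up to the required order, I would first note that differentiating $(L^h_t + J^h)\phi$ produces terms of the form (coefficient)$\times$(difference operator applied to a derivative of $\phi$) plus lower-order commutator terms in which derivatives have landed on the coefficients.

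Concretely, I would fix $l \in \{0,\dots,m\}$ and expand $\D_x^l\big((L^h_t+J^h)\phi\big)$ by the Leibniz rule. The top-order contribution is $a_t\,\delta^h\delta^h(\D_x^l\phi) + b_t\,\delta^h(\D_x^l\phi) + c_t\,\D_x^l\phi + J^h(\D_x^l\phi)$, and pairing this against $\D_x^l\phi$ in $L_2(\bR)$, Lemma 3.4 of \cite{IG} bounds the first three terms (the discretized differential operator acting on $\D_x^l\phi$, tested against $\D_x^l\phi$) by $N\|\D_x^l\phi\|_{H^1}^2 \le N\|\phi\|_{H^m}^2$, while Lemma \ref{lem: non positive operator} gives $(J^h\D_x^l\phi,\D_x^l\phi)\le 0$, so that term is simply discarded. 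The remaining commutator terms each involve $\D_x^j(\text{coefficient})$ for $1\le j\le l$ times a difference operator applied to $\D_x^{l-j}\phi$; since $\delta^h$, $\delta^h\delta^h$, $\delta_{-h}\delta_h$ are bounded on $L_2(\bR)$ uniformly in $h$ (with operator norms controlled by, e.g., $2/h$ and $1/h$ respectively — but more usefully by the fact that applied to an $H^1$ or $H^2$ function they are bounded by the corresponding Sobolev norm, again via \cite{IG}), and the coefficient derivatives are bounded by $K$, each such term is bounded in absolute value by $NK\|\phi\|_{H^{l}}\|\phi\|_{H^{l}}\le N\|\phi\|_{H^m}^2$ after an application of Cauchy--Schwarz. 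For the $J^h$ part there is no commutator issue at all, since $J^h$ has constant (in $x$) structure.

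The one point requiring a little care — and the place I would expect to do the only real work — is controlling the difference operators applied to derivatives of $\phi$ of order up to $m$ by $\|\phi\|_{H^m}$ \emph{without} a blow-up in $h$; this is exactly the content of the relevant estimates in \cite{IG} (the embedding $H^l\hookrightarrow l_2(\mathbb G_h)$ and the uniform-in-$h$ boundedness of the difference operators between Sobolev scales), so invoking those is legitimate here. Given that, the argument is a bounded collection of Cauchy--Schwarz estimates, and one assembles the pieces: the top-order term is nonpositive from $J^h$ plus $\le N\|\phi\|_{H^m}^2$ from $L^h_t$, and finitely many commutator terms are each $\le N\|\phi\|_{H^m}^2$, yielding $(\D_x^l(L^h_t+J^h)\phi,\D_x^l\phi)\le N\|\phi\|_{H^m}^2$ with $N=N(K,m)$. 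I would remark that the constant depends only on $K$ (through the coefficient bounds) and $m$ (through the number of Leibniz terms and the \cite{IG} constants), and in particular not on $h$ nor on $\nu$, which is the whole point.
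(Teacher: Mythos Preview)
Your approach is essentially the paper's: split $(\D_x^l(L^h_t+J^h)\phi,\D_x^l\phi)$ into the $J^h$ part, which is $\le 0$ by Lemma~\ref{lem: non positive operator} (using $\D_xJ^h=J^h\D_x$), and the $L^h_t$ part, which is handled by Lemma~3.4 of \cite{IG}. The paper's entire ``proof'' is precisely that one-line combination.

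One small caution on your extra detail: the Leibniz-commutator argument you sketch for the $L^h_t$ part is slightly imprecise at the $j=1$ level. The term $(\D_x a_t)\,\delta^h\delta^h(\D_x^{l-1}\phi)$ paired with $\D_x^l\phi$ gives, via Cauchy--Schwarz and $\|\delta^h\delta^h\psi\|_{L_2}\le\|\psi\|_{H^2}$, a bound $\le K\|\phi\|_{H^{l+1}}\|\phi\|_{H^l}$, not $\|\phi\|_{H^l}^2$; for $l=m$ this overshoots $H^m$. The fix (which is what \cite{IG} actually does) is a discrete integration by parts moving one $\delta^h$ onto the test function before applying Cauchy--Schwarz, exactly as in the continuous energy method. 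Since you (and the paper) both ultimately defer to \cite{IG} for the $L^h_t$ estimate, this does not affect the correctness of your proposal, but your claim that ``each such term is bounded by $NK\|\phi\|_{H^l}\|\phi\|_{H^l}$'' directly from Cauchy--Schwarz is not quite right for that top commutator.
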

The following is very well known (see e.g. \cite{IG}, \cite{GYKR}). 
\begin{lemma}                            \label{lem: estimate derivatives}
For each  integer $l \geq 0$, there is a constant $N$ depending only on $l$,  such that for all $u \in H^{l+2}$, $v \in H^{l+3}$ and $\lambda \in \bR \setminus\{0\}$,
\begin{align*}
\|\delta^{\lambda}u-\partial_x u\|_{H^l}+\|\delta_{\lambda}u-\partial_x u\|_{H^l} & \leq N|\lambda| \|u\|_{H^{l+2}}, \\
\|\delta^\lambda\delta^\lambda v-\partial_x^2 v\|_{H^l} +\|\delta_{\lambda}\delta_{-\lambda} v-\partial_x^2 v\|_{H^l} &\leq N|\lambda| \|v\|_{H^{l+3}}.
\end{align*}  
\end{lemma}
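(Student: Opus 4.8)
The plan is to obtain all four estimates from the fundamental theorem of calculus together with the translation-invariance of the $H^l$ norms, with no Fourier analysis needed (a Fourier-multiplier argument would also work, but one then has to treat the regimes $|\lambda\xi|\le 1$ and $|\lambda\xi|>1$ separately to get a bound uniform in $\lambda$). By a standard density argument it suffices to prove the estimates for $u,v\in C^\infty_c$, which legitimizes all the manipulations below.

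First I would record the representation $\delta_\lambda u(x)=\int_0^1 \partial_x u(x+t\lambda)\,dt$, whence
\[
\delta_\lambda u(x)-\partial_x u(x)=\int_0^1\!\!\int_0^1 t\lambda\,\partial_x^2 u(x+st\lambda)\,ds\,dt .
\]
Taking $H^l$ norms under the integral sign (Minkowski's integral inequality) and using translation-invariance, $\|\partial_x^2 u(\cdot+st\lambda)\|_{H^l}=\|\partial_x^2 u\|_{H^l}\le\|u\|_{H^{l+2}}$, gives $\|\delta_\lambda u-\partial_x u\|_{H^l}\le\tfrac12|\lambda|\,\|u\|_{H^{l+2}}$; applying this with $\lambda$ and with $-\lambda$ and averaging gives the same bound for $\delta^\lambda u-\partial_x u$ since $\delta^\lambda=\tfrac12(\delta_\lambda+\delta_{-\lambda})$. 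The same representation also gives the auxiliary bounds $\|\delta_\lambda w\|_{H^l}\le\|w\|_{H^{l+1}}$ and $\|\delta^\lambda w\|_{H^l}\le\|w\|_{H^{l+1}}$ for $w\in H^{l+1}$, which I will use below.

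For the second-order estimates I would telescope:
\[
\delta^\lambda\delta^\lambda v-\partial_x^2 v=\delta^\lambda\big(\delta^\lambda v-\partial_x v\big)+\big(\delta^\lambda\partial_x v-\partial_x^2 v\big),
\]
and analogously $\delta_\lambda\delta_{-\lambda}v-\partial_x^2 v=\delta_\lambda\big(\delta_{-\lambda}v-\partial_x v\big)+\big(\delta_\lambda\partial_x v-\partial_x^2 v\big)$. The second bracket is bounded, via the first-order estimate applied to $\partial_x v\in H^{l+2}$, by $\tfrac12|\lambda|\,\|\partial_x v\|_{H^{l+2}}\le\tfrac12|\lambda|\,\|v\|_{H^{l+3}}$. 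The first bracket is bounded by first using the auxiliary boundedness with $w=\delta^\lambda v-\partial_x v$ (resp. $w=\delta_{-\lambda}v-\partial_x v$) and then the first-order estimate with $l$ replaced by $l+1$ applied to $v\in H^{l+3}$, giving again $\tfrac12|\lambda|\,\|v\|_{H^{l+3}}$. Summing, $\|\delta^\lambda\delta^\lambda v-\partial_x^2 v\|_{H^l}+\|\delta_\lambda\delta_{-\lambda}v-\partial_x^2 v\|_{H^l}\le 2|\lambda|\,\|v\|_{H^{l+3}}$.

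There is no genuine obstacle; the lemma is elementary and classical. The two points calling for a little care are the justification of the integral representations and of moving $\|\cdot\|_{H^l}$ inside the integral (both handled by the density reduction and Minkowski's inequality), and the requirement that the bounds be uniform over all $\lambda\in\bR\setminus\{0\}$ rather than only for small $|\lambda|$; the latter is automatic here, since every inequality above holds with an explicit constant (in fact independent of $l$ as well).
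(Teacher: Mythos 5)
Your argument is correct and complete. The paper itself offers no proof of this lemma (it is stated as "very well known" with references to Gy\"ongy's papers), and the argument given there and in the cited sources is exactly the one you use: the integral representation $\delta_\lambda u(x)=\int_0^1\partial_x u(x+t\lambda)\,dt$, a further application of the fundamental theorem of calculus, Minkowski's integral inequality together with translation invariance of the $H^l$ norm, and the telescoping decomposition for the second-order differences. Nothing to add.
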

For our approximation we have the following consistency estimates.
\begin{lemma}                      \label{lem: estimate J}
For any integer $l \geq 0$,  and any $\phi \in H^{l+3}$  we have
\begin{equation}           \label{eq: estimate J}
\|J^h\phi -J\phi \|_{H^l} \leq N h \|\phi\|_{l+3},
\end{equation}
where $N$ is a constant depending only on $l$, $\mu_0$ and $\mu_2$. 
\end{lemma}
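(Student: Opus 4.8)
The plan is to match the grid decomposition $J^h=J_1^h+J_2^h$ with the natural splitting $J=J_{\mathrm{near}}+J_{\mathrm{far}}$, where $J_{\mathrm{near}}\phi=\int_{|z|\leq1}(\phi(\cdot+z)-\phi-z\D_x\phi)\,\nu(dz)$ and $J_{\mathrm{far}}\phi=\int_{|z|>1}(\phi(\cdot+z)-\phi)\,\nu(dz)$. This splitting is adapted to the scheme because $\{B^h_k:k\in\bA_h\}$ partitions $[-1,1]\setminus\{0\}$, which carries the same $\nu$-mass as $[-1,1]$ since $\nu(\{0\})=0$, while $\{B^h_k:k\in\bB_h\}$ partitions $\bR\setminus[-1,1]$; in particular $\sum_{k\in\bA_h}\zeta^h_k=\mu_2$ and $\sum_{k\in\bB_h}\nu(B^h_k)=\mu_0$. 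Since the coefficients of $J$ and $J^h$ are independent of $x$, both operators commute with $\D_x$, so $\D_x^j(J^h\phi-J\phi)=J^h\D_x^j\phi-J\D_x^j\phi$ for $j=0,\dots,l$, and it suffices to prove the bound $\|J^h\psi-J\psi\|_{L_2}\leq Nh\|\psi\|_{H^3}$ for $\psi\in H^3$ and then apply it to $\psi=\D_x^j\phi$. Throughout I use the translation invariance of the $L_2$-norm and the $L_2$ form of the mean value inequality $\|\psi(\cdot+a)-\psi(\cdot+b)\|_{L_2}\leq|a-b|\,\|\D_x\psi\|_{L_2}$.

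For the far part, $J_{\mathrm{far}}\psi-J_2^h\psi=\sum_{k\in\bB_h}\int_{B^h_k}\big(\psi(\cdot+z)-\psi(\cdot+hk)\big)\,\nu(dz)$, and since $|z-hk|\leq h$ whenever $z\in B^h_k$, this has $L_2$-norm at most $h\,\|\D_x\psi\|_{L_2}\sum_{k\in\bB_h}\nu(B^h_k)=h\mu_0\|\psi\|_{H^1}$. The near part is the heart of the matter. Taylor's formula with integral remainder gives $J_{\mathrm{near}}\psi(x)=\sum_{k\in\bA_h}\int_{B^h_k}z^2\int_0^1(1-\theta)\,\D_x^2\psi(x+\theta z)\,d\theta\,\nu(dz)$, which I compare termwise in $k$ with $J_1^h\psi(x)=\sum_{k\in\bA_h}\zeta^h_k\sum_{l=0}^{|k|-1}\theta_k^l\,\delta_{-h}\delta_h\psi(x+s_klh)$ through three successive replacements, each of which I claim costs at most $Nh\,\zeta^h_k\|\psi\|_{H^3}$ in $L_2$: (a) replace the argument $x+\theta z$ by $x+\theta hk$, using $|z-hk|\leq h$ on $B^h_k$ and $\int_{B^h_k}z^2\,\nu(dz)=\zeta^h_k$; (b) on each subinterval $\theta\in[l/|k|,(l+1)/|k|]$ replace $\D_x^2\psi(x+\theta hk)$ by $\D_x^2\psi(x+s_klh)$, which is legitimate because $|k|\theta\in[l,l+1]$ forces $|\theta hk-s_klh|\leq h$, and this turns $\int_{l/|k|}^{(l+1)/|k|}(1-\theta)\,d\theta$ into precisely the weight $\theta_k^l$; and (c) replace $\D_x^2\psi$ by $\delta_{-h}\delta_h\psi=\delta_h\delta_{-h}\psi$ using Lemma \ref{lem: estimate derivatives} with $\lambda=h$, together with $\sum_{l=0}^{|k|-1}\theta_k^l=1/2$. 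Summing over $k\in\bA_h$ and using $\sum_{k\in\bA_h}\zeta^h_k=\mu_2$ gives $\|J_{\mathrm{near}}\psi-J_1^h\psi\|_{L_2}\leq Nh\mu_2\|\psi\|_{H^3}$; combining with the far part and reinstating the spatial derivatives yields \eqref{eq: estimate J} with $N$ depending only on $l$, $\mu_0$ and $\mu_2$.

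The bulk of the work is routine bookkeeping: the elementary inequalities $|z-hk|\leq h$ on $B^h_k$ and $|k|\theta\in[l,l+1]$ on $[l/|k|,(l+1)/|k|]$, together with the $L_2$ mean value estimate and a single application of Lemma \ref{lem: estimate derivatives}. The only point that genuinely requires attention is steps (b) and (c): one has to verify that, after the three replacements, the quadrature reassembles \emph{exactly} into $J_1^h\psi$ — i.e. that the integrated weights $\theta_k^l$ and the masses $\zeta^h_k$ coincide with the coefficients appearing in the definition of $J_1^h$ — and that the constant delivered by Lemma \ref{lem: estimate derivatives} depends only on the order of the Sobolev norm, not on $k$. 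This uniformity in $k$ is exactly what keeps the final constant from picking up any dependence on $h$ or on $\nu$ beyond $\mu_0$ and $\mu_2$, and it is the main (though mild) obstacle; the rest is direct.
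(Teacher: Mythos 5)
Your proposal is correct and follows essentially the same route as the paper: reduce to $l=0$ by commuting with $\D_x$, bound the far part by $h\mu_0\|\phi\|_{H^1}$ via the $O(h)$ argument shift on each $B^h_k$, and for the near part compare $\delta_{-h}\delta_h\phi(\cdot+s_klh)$ with $\D_x^2\phi(\cdot+\theta z)$ through $O(h)$ translations plus Lemma \ref{lem: estimate derivatives}, summing the weights $\zeta^h_k$ and $\theta^l_k$ to $\mu_2$ and $1/2$. The only cosmetic difference is that you shift the argument of $\D_x^2\psi$ in two stages ($\theta z\to\theta hk\to s_klh$) before invoking the consistency lemma, whereas the paper shifts the discrete operator directly using $|s_klh-\theta z|\leq 2h$; the estimates are identical.
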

\begin{proof}
Again we can and we will assume that $l=0$. 
We have 
\begin{align*}
J^h_2 \phi (x)- J_2\phi(x) & = \sum_{k \in \bB_h} \int_{B^h_k} \left( \phi(x+hk)-\phi(x+z) \right) \nu(dz) \\
&=\sum_{k \in \bB_h}  \int_{B^h_k} \int_0^1 (hk-z) \D_x \phi(x+z+\theta(hk- z) d\theta \nu(dz), 
\end{align*}
which combined with the fact that $|hk-z| \leq h$ for $z \in B^h_k$, gives by virtue of Minkowski's integral inequality
\begin{equation}                                          \label{eq: J2 estimate}
\|J^h_2 \phi- J_2\phi\| \leq h \mu_0 \|\phi\|_{H^1}  
\end{equation}
For $J_1^h-J_1$ we have
\begin{align}                      \label{eq: est integrand}
\nonumber 
&J^h_1\phi -J_1\phi \\  \nonumber 
=& \sum_{k \in \bA_h} \zeta^h_k \sum_{l=0}^{|k|-1} \theta_k^l  \delta_{-h} \delta_h \phi(x+s_k l h ) - \int_{|z|\leq 1} \int_0^1 (1-\theta) z^2\D^2_x \phi (x+\theta z)  d\theta \nu(dz) \\
=& \sum_{k \in \bA_h} \int_{B^h_k} \sum_{l=1}^{|k|-1} \int_{l/|k|} ^{(l+1)/|k|} z^2(1-\theta) \left(\delta_{-h} \delta_h \phi(x+s_klh) -\D^2_x \phi(x+\theta z)\right)  d\theta \nu(dz).
\end{align}
Then we have for the integrand in the above quantity
\begin{align}             \nonumber
 &\delta_{-h} \delta_h \phi(x+s_klh) -\D^2_x \phi(x+\theta z) \\
\nonumber 
=& \delta_{-h} \delta_h \phi(x+s_klh)-\delta_{-h} \delta_h \phi(x+\theta z ) 
+\delta_{-h} \delta_h \phi(x+\theta z )-\D^2_x \phi(x+\theta z) \\
   \nonumber
=& \int_0^1 (s_k l h - \theta z) \delta_{-h} \delta_h \D_x  \phi(x+\theta z+\eta (s_k l h - \theta z )) d \eta\\
\label{eq: integrand estimate}
&+  \delta_{-h} \delta_h \phi(x+\theta z )-\D^2_x \phi(x+\theta z).
\end{align}
Notice that for $\theta \in [l/|k|,  (l+1)/|k|)$ and for $z \in B^h_k$ we have
$$
|s_k l h - \theta z| \leq |s_klh-  \theta kh |+|\theta k h - \theta z| \leq 2 h.
$$ 
Hence, for the first term at the right hand side of \eqref{eq: integrand estimate} we have 
$$
\big\| \int_0^1 (s_k l h - \theta z) \delta_{-h} \delta_h \D_x  \phi(\cdot+\theta z \eta (s_k l h - \theta z )) d \eta \big\|_{L_2} \leq 2 h \|\phi\|_{H^3},
$$
while for the second one we have by Lemma \ref{lem: estimate derivatives}
$$
 \|\delta_{-h} \delta_h \phi(\cdot+\theta z )-\D^2_x \phi(\cdot+\theta z)\|_{L_2} \leq h \|\phi\|_{H^3}.
$$
Therefore, 
$$
 \|\delta_{-h} \delta_h \phi(\cdot +s_klh) -\D^2_x \phi(\cdot+\theta z)\|_{L_2} \leq N h \|\phi\|_{H^3}, 
$$
which combined with \ref{eq: est integrand} and Minkowski's inequality gives
$$
\|J^h_1\phi -J_1\phi\|_{L_2} \leq N h \|\phi\|_{H^3}. 
$$
By this inequality and  \eqref{eq: J2 estimate} we obtain \eqref{eq: estimate J}. 
\end{proof}

\begin{lemma}                       \label{lem: boundedness of L^h J^h}
Let (i) from Assumption \ref{as: bounded coef}  hold. Then for any $l \leq m$ and for any $\phi \in H^{l+2}$, $t \in [0,T]$ we have
\begin{align*}
\|L^h_t \phi \|^2_{H^l} +\|J^h \phi \|^2_{H^l}& \leq N \|\phi\|^2_{H_{l+2}},
\end{align*}
where $N$ is a constant depending only on $K,m$, $\mu_0$ and $\mu_2$. 
\end{lemma}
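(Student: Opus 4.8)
The plan is to estimate $\|L^h_t\phi\|_{H^l}$ and $\|J^h\phi\|_{H^l}$ separately, the common ingredients being that the difference operators $\delta_{\pm h}$, $\delta^h$, $\delta^h\delta^h$, $\delta_{-h}\delta_h$ commute with $\D_x$ and with translations, and that they are controlled on $L_2$ by spatial derivatives of $\phi$ uniformly in $h\in(0,1)$. For the latter I would write each first-order difference as an average of $\D_x\phi$ over an interval of length $h$, e.g.\ $\delta_h\phi(x)=h^{-1}\int_0^h\D_x\phi(x+s)\,ds$, and each second-order difference (namely $\delta_{-h}\delta_h\phi$ and the four terms making up $\delta^h\delta^h\phi$) as a double average of $\D_x^2\phi$; Minkowski's integral inequality then yields $\|\delta^h\phi\|_{L_2}\le\|\D_x\phi\|_{L_2}$ and $\|\delta_{-h}\delta_h\phi\|_{L_2},\ \|\delta^h\delta^h\phi\|_{L_2}\le\|\D_x^2\phi\|_{L_2}$. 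Applying these to $\D_x^j\phi$ for $0\le j\le l$ and summing gives $\|\delta^h\phi\|_{H^l}\le\|\phi\|_{H^{l+1}}$ and $\|\delta_{-h}\delta_h\phi\|_{H^l},\ \|\delta^h\delta^h\phi\|_{H^l}\le\|\phi\|_{H^{l+2}}$. I deliberately avoid Lemma \ref{lem: estimate derivatives} here, since it would require $\phi\in H^{l+3}$, while only $\phi\in H^{l+2}$ is available.

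For $L^h_t\phi = a_t\delta^h\delta^h\phi + b_t\delta^h\phi + c_t\phi$ I would apply the Leibniz rule to each of $\D_x^j(a_t\delta^h\delta^h\phi)$, $\D_x^j(b_t\delta^h\phi)$, $\D_x^j(c_t\phi)$ for $0\le j\le l\le m$. Every spatial derivative of $a$ that appears has order $\le l\le m\le\max(m,2)$, and of $b$ and $c$ order $\le l\le m$, so all are bounded by $K$ under Assumption \ref{as: bounded coef}(i); combined with the difference-operator bounds above this gives $\|L^h_t\phi\|_{H^l}\le N\|\phi\|_{H^{l+2}}$ with $N=N(K,m)$, uniformly in $t\in[0,T]$.

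For $J^h=J^h_1+J^h_2$ the arithmetic facts needed are $\bigcup_{k\in\bA_h}B^h_k=[-1,1]\setminus\{0\}$, whence $\sum_{k\in\bA_h}\zeta^h_k=\mu_2$; $\bigcup_{k\in\bB_h}B^h_k=\bR\setminus[-1,1]$, whence $\sum_{k\in\bB_h}\nu(B^h_k)=\mu_0$; and $\sum_{i=0}^{|k|-1}\theta_k^i=\int_0^1(1-\theta)\,d\theta=\tfrac12$ for every $k$. Then, using the triangle inequality, translation invariance of $\|\cdot\|_{L_2}$ and commutation with $\D_x$, one gets $\|J^h_2\phi\|_{H^l}\le 2\mu_0\|\phi\|_{H^l}$ and $\|J^h_1\phi\|_{H^l}\le\tfrac12\mu_2\|\delta_{-h}\delta_h\phi\|_{H^l}\le\tfrac12\mu_2\|\phi\|_{H^{l+2}}$. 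Adding these to the bound for $L^h_t\phi$ and squaring yields the assertion with $N=N(K,m,\mu_0,\mu_2)$.

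There is no genuine difficulty in the argument; the only steps that deserve a little care are establishing the $h$-uniform second-order difference estimates under the weak regularity assumption $\phi\in H^{l+2}$ (handled by the averaging representation rather than Lemma \ref{lem: estimate derivatives}) and checking that the weights $\zeta^h_k$ and $\nu(B^h_k)$ sum exactly to $\mu_2$ and $\mu_0$.
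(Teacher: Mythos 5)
Your proof is correct and follows essentially the same route as the paper: representing the difference quotients as averages of derivatives, applying Minkowski's inequality to get $h$-uniform $L_2$ bounds, and using $\sum_{k}\zeta^h_k=\mu_2$, $\sum_{i}\theta^i_k=\tfrac12$, $\sum_k\nu(B^h_k)=\mu_0$ together with translation invariance and commutation with $\D_x$. The only (harmless) difference is that you spell out the Leibniz-rule step for $L^h_t$, while the paper first reduces to $\phi\in C^\infty_c$ by density.
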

\begin{proof}
Clearly it suffices to show the inequality for $\phi \in C^\infty_c$. We have for $\lambda \neq 0$
$$
\delta_\lambda \phi(x)=\int_0^1 \D_x \phi(x+\theta \lambda ) d\theta .
$$
Hence by Minkowski's inequality we get $\| \delta_\lambda \phi \|_{L_2} \leq \|\D_x \phi\|_{L_2}$, which implies 
$$
\|\delta^\lambda \phi \|_{L_2} \leq N\|\phi\|_{H^1},  \ \|\delta^\lambda \delta^\lambda \phi \|_{L_2} \leq N\|\phi\|_{H^2},   \|\delta_\lambda \delta_{-\lambda }\phi \|_{L_2} \leq N\|\phi\|_{H^2}.
$$
Hence, by Assumption \ref{as: bounded coef} (i)  we have
$$
\|L^h_t \phi\|^2_{H^l} \leq N \|\phi \|^2_{H^{l+2}}. 
$$
By Minkowski's inequality we have
$$
\|J^h_1\phi\|_{L_2}  \leq  \sum_{k \in \bA_h} \zeta^h_k \sum_{l=0}^{|k|-1} \theta_k^l  \| \delta_{-h} \delta_h \phi( \cdot +s_k l h ) \|_{L_2} \leq \frac{1}{2} \mu_2 \|\phi\|_{H^2}
$$
and
$$
\| J_2^h \phi \|_{L_2} \leq  \sum_{k \in \bB^h} \| \phi(x+h k) - \phi\|_{L_2}  \leq 2\mu_0  \|\phi\|_{L_2}.
$$
These estimates,  combined with the fact that $\D_xJ^h=J^h\D_x$, give
$$
\|J^h\phi\|_{H^l} \leq N\|\phi\|_{H^{l+2}}.
$$
This finishes the proof.
\end{proof}
Next we consider in $L_2(\bR)$ the following scheme
\begin{align}       \label{eq: discr}
d u^h_t&= \left( (L^h_t+I^h) u^h_t + f_t \right) dt  \\   \label{eq: discr init con}
u^h_0&= \psi.
\end{align}

\begin{lemma}                \label{lem: consistency}
Let Assumption \ref{as: bounded coef} hold with some integer $l\geq 1$ instead of $m$.  Then \eqref{eq: discr}-\eqref{eq: discr init con}  has a unique $L_2$-solution $(u^h_t)_{t \in [0,T]}$ which is a continuous $H^l$-valued function. If moreover Assumption \ref{as: elipticity} holds, then there exists a constant $N=N(l,T,K)$ such that for all $h \in \mathfrak{N}$ 
\begin{equation}            \label{eq: main estimate}
\sup_{t \leq T}\|u^h_t\|^2_{H^l} \leq N  \mathcal{K}^2_l.
\end{equation}
\end{lemma}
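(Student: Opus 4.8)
The plan is to treat \eqref{eq: discr}--\eqref{eq: discr init con} as a linear evolution equation in the Gelfand triple $H^{l+1} \hookrightarrow H^l \hookrightarrow H^{l-1}$ (or, at the base level, $H^1 \hookrightarrow L_2 \hookrightarrow H^{-1}$) and apply standard variational existence/uniqueness theory for monotone linear operators. Existence and uniqueness of an $L_2$-solution, together with its continuity as an $H^l$-valued function, follow once I verify that $A^h_t := L^h_t + J^h$ is a bounded linear operator with the appropriate measurability in $t$ and that it satisfies a coercivity (Gårding-type) inequality in the relevant Sobolev scale. Boundedness of $A^h_t$ from $H^{l+2}$ (indeed from $H^l$, since the $\delta$-operators are bounded on $L_2$ uniformly in $h$) is exactly Lemma~\ref{lem: boundedness of L^h J^h}; the Gårding inequality $(\D^l_x A^h_t \phi, \D^l_x \phi) \leq N\|\phi\|_{H^l}^2$ is Lemma~\ref{lem: coercivity}. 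Hence the general theory (as in the references \cite{KD1}, \cite{ML}, or the standard monotone-operator framework) yields a unique solution $u^h$ with $\sup_{t\le T}\|u^h_t\|_{H^l}<\infty$ and $u^h \in C([0,T];H^l)$ — though without ellipticity the constant in this a priori bound a priori depends on $h$ through the coercivity constant, which is \emph{not} what we want for \eqref{eq: main estimate}.

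The heart of the matter is therefore the $h$-independent estimate \eqref{eq: main estimate} under Assumption~\ref{as: elipticity}. Here the key input is the negative semi-definiteness of $J^h$: by Lemma~\ref{lem: non positive operator}, $(\D^j_x J^h u^h_t, \D^j_x u^h_t)\le 0$ for every $j\le l$. For the differential part, differentiating the equation $j$ times, applying Itô's formula (really the deterministic energy identity) to $\|\D^j_x u^h_t\|_{L_2}^2$, and using integration by parts on the $\delta^h\delta^h$ term, I expect the standard cancellation: the leading term $(a_t \delta^h\delta^h \D^j_x u^h_t, \D^j_x u^h_t)$ is controlled, after summation by parts in the discrete variable, by $-(a_t \delta^h \D^j_x u^h_t, \delta^h \D^j_x u^h_t)$ plus lower-order commutator terms involving derivatives of $a_t$ (which is why Assumption~\ref{as: bounded coef} asks for two extra derivatives of $a$); the crucial sign $a_t\ge 0$ makes the principal term $\le 0$, and the commutators, together with the $b_t,c_t$ terms, are absorbed using Cauchy--Schwarz and Young's inequality into $N\|u^h_t\|_{H^j}^2 + N\|f_t\|_{H^j}^2$. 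Summing over $j=0,\dots,l$ gives
\[
\frac{d}{dt}\|u^h_t\|_{H^l}^2 \le N\|u^h_t\|_{H^l}^2 + N\|f_t\|_{H^l}^2,
\]
and Grönwall's inequality then yields $\sup_{t\le T}\|u^h_t\|_{H^l}^2 \le N(\|\psi\|_{H^l}^2 + \int_0^T\|f_t\|_{H^l}^2\,dt) = N\mathcal{K}_l^2$ with $N=N(l,T,K)$, as claimed. (This discrete energy argument in the presence of the one-sided $a_t\ge 0$ condition is precisely the content of Lemma~3.4 of \cite{IG}, which is invoked in Lemma~\ref{lem: coercivity}, so I would cite that lemma to handle the $L^h_t$ part and combine it with Lemma~\ref{lem: non positive operator} for $J^h$.)

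The main obstacle I anticipate is \emph{justifying the energy identity rigorously} — i.e., that $t\mapsto \|\D^j_x u^h_t\|_{L_2}^2$ is absolutely continuous with the expected derivative. Since at the outset we only know $u^h$ solves the equation in the $L_2$ sense and lies in $C([0,T];H^l)$, one must either (a) bootstrap regularity in $t$ (observe $\partial_t u^h_t = A^h_t u^h_t + f_t \in C([0,T];H^{l-2})$, so $u^h \in C^1([0,T];H^{l-2})$, which lets one differentiate the pairing for $j\le l-1$ and then handle $j=l$ by a density/approximation argument in $H^l$), or (b) invoke the abstract energy equality from the variational theory directly in the triple $H^{l+1}\hookrightarrow H^l\hookrightarrow H^{l-1}$, noting $A^h_t$ maps $H^{l}$ boundedly into $H^{l-1}$ uniformly in $h$ by Lemma~\ref{lem: boundedness of L^h J^h}. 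Either route is routine but needs to be stated carefully; once the identity is in hand, the Grönwall step is immediate and the $h$-independence of $N$ is manifest because every constant used (from Lemmas~\ref{lem: non positive operator}, \ref{lem: coercivity}, \ref{lem: boundedness of L^h J^h}) is already $h$-independent.
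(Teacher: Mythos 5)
Your energy estimate is essentially the paper's argument: the identity for $\|u^h_t\|_{H^l}^2$, Lemma~\ref{lem: coercivity} (i.e.\ Lemma~3.4 of \cite{IG} for $L^h_t$ combined with Lemma~\ref{lem: non positive operator} for $J^h$), Young's inequality and Gr\"onwall, with all constants $h$-independent. Where you diverge is in the functional-analytic packaging, and here you make the problem harder than it is. The paper does not use a Gelfand triple or monotone-operator theory: for \emph{fixed} $h$ the operator $L^h_t+J^h$ is a bounded linear operator from $L_2$ to $L_2$ and from $H^l$ to $H^l$ (the difference quotients $\delta_h$ are bounded on $L_2$ with norm of order $1/h$, which is all one needs here), so \eqref{eq: discr}--\eqref{eq: discr init con} is an ordinary differential equation in a Hilbert space with a Lipschitz right-hand side; Picard iteration gives the unique continuous $L_2$- and $H^l$-valued solutions, which coincide since $H^l\subset L_2$. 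This also dissolves the ``main obstacle'' you anticipate: since $t\mapsto u^h_t$ is absolutely continuous with values in $H^l$ and derivative $A^h_tu^h_t+f_t\in L^1([0,T];H^l)$, the map $t\mapsto\|u^h_t\|_{H^l}^2$ is absolutely continuous and the energy identity is immediate --- no bootstrap into $H^{l-2}$ and no abstract energy equality are needed, because the discretized operator does not lose derivatives. Your framing of $A^h_t$ as mapping $H^l$ into $H^{l-1}$ (or $H^{l-2}$) imports the continuous-operator intuition that is precisely absent after discretization. One small inaccuracy: you assert the $\delta$-operators are bounded on $L_2$ \emph{uniformly in $h$}; they are not (their $L_2\to L_2$ norm blows up like $1/h$) --- the uniform-in-$h$ bounds are $H^1\to L_2$ and $H^2\to L_2$, as in Lemma~\ref{lem: boundedness of L^h J^h}. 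This does not affect your argument, since existence only needs boundedness for each fixed $h$ and the uniformity enters only through the coercivity and consistency lemmas, but the statement as written is false.
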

\begin{proof}
Equation  \eqref{eq: discr}-\eqref{eq: discr init con} is a differential equation on $L_2$ with Lipschitz continuous coefficients and therefore has a unique $L_2$-valued continuous solution $(u^h_t)_{t \in [0,T]}$. Similarly, it is a differential equation on $H^l$ with Lipschitz continuous coefficients and therefore has a unique $H^l$-valued continuous  solution $(w^h_t)_{t \in [0,T]}$. Since $H^l \subset L_2$ we have that $w^h=u^h$. 

For  \eqref{eq: main estimate},  we have for any $t \in [0,T]$
\begin{align*}
\|u^h_t\|^2_{H_l}&=\|\psi\|^2_{H^l} + \int_0^t \left[\left( (L^h_s +I^h) u^h_s, u^h_s\right)_{H^l}+ (f_s, u^h_s)_{H^l} \right] \ ds 
\\ 
&\leq  \|\psi\|^2_{H^l} + N\int_0^t \|u^h_s \|^2_{H^l}  \ ds+\int_0^T\|f_s\|^2_{H^l} \  ds< \infty,
\end{align*}
where the last inequality is by virtue of   Lemma \ref{lem: coercivity} and Young's inequality. Gronwall's lemma finishes the proof.
\end{proof}

\begin{theorem}                   \label{thm: spatial error Sobolev}
Let Assumptions \ref{as: bounded coef} and \ref{as: elipticity} with $m \geq 4 $, and let $u^h$ and $u$ be the unique solutions of  \eqref{eq: discr}-\eqref{eq: discr init con} and  \eqref{eq: main equation}-\eqref{eq: main equation initial condition} respectively. Then for any $h \in \mathfrak{N}$ we have
\begin{equation}
\sup_{t \leq T} \|u_t-u^h_t\|^2_{H^{m-3}} \leq N  \mathcal{K}^2_m h  ,
\end{equation}
where $N$ is a constant depending only on $m,T,\mu_0$, $\mu_2$ and $K$. 
\end{theorem}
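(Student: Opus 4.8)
The plan is to derive the bound from an energy inequality for the error $e^h_t:=u_t-u^h_t$ in the $H^{m-3}$-norm. Subtracting \eqref{eq: discr}--\eqref{eq: discr init con} from \eqref{eq: main equation}--\eqref{eq: main equation initial condition} one obtains that $e^h$ solves, with $e^h_0=0$,
$$de^h_t=\bigl[(L^h_t+J^h)e^h_t+g^h_t\bigr]\,dt,\qquad g^h_t:=(L_t-L^h_t)u_t+(J-J^h)u_t .$$
This is to be read as an identity between continuous $H^{m-3}$-valued functions: $u$ is $H^m$-valued and strongly continuous as an $H^{m-1}$-valued function, while $u^h$ is a continuous $H^m$-valued function by Lemma \ref{lem: consistency} (used with $l=m$); hence $e^h$ is continuous with values in $H^{m-1}$, and since $L^h_t+J^h\colon H^{m-1}\to H^{m-3}$ is bounded uniformly in $t$ (Lemma \ref{lem: boundedness of L^h J^h}) while $g^h$ is $H^{m-3}$-valued and continuous (a consequence of Step 1 below applied with $u_t-u_s$ in place of $u_t$), the right-hand side is continuous in $H^{m-3}$ and $e^h\in C^1([0,T];H^{m-3})$.

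Step 1 (consistency). I claim $\sup_{t\le T}\|g^h_t\|_{H^{m-3}}\le N h\,\mathcal{K}_m$. For the nonlocal part this is Lemma \ref{lem: estimate J} with $l=m-3$: $\|(J-J^h)u_t\|_{H^{m-3}}\le N h\|u_t\|_{H^m}$. For the local part the zeroth-order terms cancel, so $L_t-L^h_t=a_t(\D_x^2-\delta^h\delta^h)+b_t(\D_x-\delta^h)$; Lemma \ref{lem: estimate derivatives} gives $\|(\D_x^2-\delta^h\delta^h)u_t\|_{H^{m-3}}\le N h\|u_t\|_{H^m}$ and $\|(\D_x-\delta^h)u_t\|_{H^{m-3}}\le N h\|u_t\|_{H^{m-1}}$, and multiplication by $a_t$ or $b_t$ is bounded on $H^{m-3}$ with constant depending only on $K$ (here Assumption \ref{as: bounded coef}(i) and $m\ge4$ are used). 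Combining these and invoking the a priori estimate $\sup_{t\le T}\|u_t\|_{H^m}^2\le N\mathcal{K}^2_m$ from the well-posedness theorem proves the claim.

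Step 2 (energy estimate). By the chain rule, for every $t\in[0,T]$,
$$\|e^h_t\|_{H^{m-3}}^2=2\int_0^t\bigl((L^h_s+J^h)e^h_s,e^h_s\bigr)_{H^{m-3}}\,ds+2\int_0^t\bigl(g^h_s,e^h_s\bigr)_{H^{m-3}}\,ds .$$
By Lemma \ref{lem: coercivity} (with the index $m-3\le m$), $\bigl((L^h_s+J^h)e^h_s,e^h_s\bigr)_{H^{m-3}}\le N\|e^h_s\|_{H^{m-3}}^2$, and by Cauchy--Schwarz, Young's inequality and Step 1, $2\bigl(g^h_s,e^h_s\bigr)_{H^{m-3}}\le\|g^h_s\|_{H^{m-3}}^2+\|e^h_s\|_{H^{m-3}}^2\le N h^2\mathcal{K}^2_m+\|e^h_s\|_{H^{m-3}}^2$. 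Therefore $\|e^h_t\|_{H^{m-3}}^2\le N\mathcal{K}^2_m h^2+N\int_0^t\|e^h_s\|_{H^{m-3}}^2\,ds$, and Gronwall's lemma yields $\sup_{t\le T}\|e^h_t\|_{H^{m-3}}^2\le N\mathcal{K}^2_m h$ (in fact with the sharper power $h^2$ on the right, since $h<1$), which is the estimate in the statement.

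All the genuinely analytic ingredients — the a priori bound for $u$, the consistency estimates of Lemmas \ref{lem: estimate derivatives} and \ref{lem: estimate J}, and above all the coercivity of $L^h_t+J^h$ in Sobolev norms (Lemma \ref{lem: coercivity}), which rests on the negative semidefiniteness of $J^h$ (Lemma \ref{lem: non positive operator}) and on Assumption \ref{as: elipticity} ($a\ge0$) through Lemma 3.4 of \cite{IG} — are already in place, so I do not anticipate a serious difficulty. The points requiring care are of a bookkeeping nature: keeping track of the Sobolev indices so that the consistency error genuinely lies in $H^{m-3}$ with an $O(h)$ bound (which is exactly where $u\in H^m$ and $m\ge4$ enter), and justifying the energy identity, which is why the regularity of $e^h$ is established first.
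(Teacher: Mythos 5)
Your proof is correct and is essentially the paper's argument: the paper also writes $u-u^h$ as the solution of the spatially discretized equation with zero initial data and forcing equal to the consistency error $(L^h_t-L_t)u_t+(J^h-J)u_t$, bounds that forcing via Lemmas \ref{lem: estimate derivatives} and \ref{lem: estimate J} together with the a priori bound $\sup_t\|u_t\|^2_{H^m}\le N\mathcal{K}^2_m$, and then applies the stability estimate of Lemma \ref{lem: consistency} with $l=m-3$ — which is exactly the coercivity--Young--Gronwall energy argument you carry out by hand in Step 2. The only cosmetic difference is that you unwind Lemma \ref{lem: consistency} instead of citing it, and you correctly observe that the bound in fact holds with $h^2$ in place of $h$.
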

\begin{proof}
We have that $u^h-u$ satisfies the conditions of Lemma \ref{lem: consistency} with $l=m-3$, $\psi=0$ and $f_t=(L^h_t-L_t)u_t+(I^h-I)u_t$. Therefore we have
\begin{align}
\sup_{t \leq T} \|u^h_t-u_t \|^2_{H^{m-3}} \leq & N \int_0^T \|(L^h_t-L_t)u_t+(J^h-J)u_t\|^2_{H^{m-3}}  \ dt \\
\leq & N h \int_0^T \|u\|^2_{H^m} \ dt \leq N h \mathcal{K}_m^2. 
\end{align}
where the second inequality follows from Lemmata \ref{lem: estimate derivatives} and  \ref{lem: estimate J}. This finishes the proof. 
\end{proof}
Next we continue with the time discretization. Let us consider on $L_2(\bR)$ the following implicit scheme. 
\begin{align}     \label{eq: space time dis}
u_i&=u_{i-1}+ \tau [(L^h_{i\tau}+J^h)u_i +f_{i\tau} ] , \  i=1,...,n \\           \label{eq: space time dis init}
v_0&= \psi.
\end{align}

The following is very well known. 
\begin{lemma}        \label{lem: solvability hilbert}
Let $\mathbb{D}$ be a bounded linear operator on a Hilbert space $X$ into itself. If there exists $\delta>0$ such that $(\mathbb{D}\phi, \phi)_X \geq \delta \|\phi\|_X^2$, for all $\phi \in X$, then for any $f \in X$, there exists a unique $g \in X$ such that
$\mathbb{D}g=f$.
\end{lemma}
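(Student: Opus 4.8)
The plan is to recognize this as (a special case of) the Lax--Milgram lemma and to give the standard two-step argument: uniqueness via the coercivity estimate, and existence via a closed-range plus orthogonal-complement argument.

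For uniqueness, I would simply note that if $\mathbb{D}g_1=\mathbb{D}g_2$, then testing the coercivity inequality against $\phi=g_1-g_2$ gives $\delta\|g_1-g_2\|_X^2\le(\mathbb{D}(g_1-g_2),g_1-g_2)_X=0$, whence $g_1=g_2$. For existence, the key observation is that Cauchy--Schwarz together with the hypothesis yields the lower bound $\|\mathbb{D}\phi\|_X\ge\delta\|\phi\|_X$ for every $\phi\in X$. From this one reads off first that $\mathbb{D}$ is injective and, more importantly, that its range $R:=\mathbb{D}(X)$ is closed: a Cauchy sequence in $R$, say $(\mathbb{D}\phi_n)$, forces $(\phi_n)$ to be Cauchy by the lower bound, hence convergent to some $\phi$, and then continuity of $\mathbb{D}$ identifies the limit as $\mathbb{D}\phi\in R$. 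Once $R$ is known to be a closed subspace I would decompose $X=R\oplus R^{\perp}$ and observe that any $w\in R^{\perp}$ satisfies $(\mathbb{D}w,w)_X=0$, which by coercivity forces $w=0$; thus $R^{\perp}=\{0\}$ and $R=X$. Hence $\mathbb{D}$ is a bijection of $X$ onto itself, and $g:=\mathbb{D}^{-1}f$ is the desired (unique) solution.

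There is no real obstacle here; the only point requiring a moment's thought is the surjectivity, and the orthogonal-complement trick above disposes of it. An equally short alternative I might use instead is a fixed-point argument: for $\lambda>0$ small the map $T\phi:=\phi-\lambda(\mathbb{D}\phi-f)$ satisfies $\|T\phi_1-T\phi_2\|_X^2\le(1-2\lambda\delta+\lambda^2\|\mathbb{D}\|^2)\|\phi_1-\phi_2\|_X^2$, which for $\lambda<2\delta/\|\mathbb{D}\|^2$ makes $T$ a contraction on $X$, and its unique fixed point solves $\mathbb{D}g=f$. Either route gives the statement, and since the result is classical I would keep the write-up to a few lines.
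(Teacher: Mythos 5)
Your proof is correct. The paper itself offers no argument for this lemma --- it is stated with the remark that it is ``very well known'' --- so there is nothing to compare against; both of your routes (the closed-range/orthogonal-complement argument and the contraction $T\phi=\phi-\lambda(\mathbb{D}\phi-f)$ with $0<\lambda<2\delta/\|\mathbb{D}\|^2$) are standard, complete, and correct, and either would serve as a proof here.
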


\begin{theorem}                               \label{thm: existence uniqueness L2}
Let Assumptions \ref{as: bounded coef} and \ref{as: elipticity} hold. Then there exists a constant $N'$ depending only on $K$, $T$ and $m$, such that if $n>N'$, for any $h \in \mathfrak{N}$ there exists a unique $L_2$-solution $(u_i^{h,\tau})_{i=0}^n$ of \eqref{eq: space time dis}-\eqref{eq: space time dis init}. Moreover $u^{h,\tau}_i \in H^m$ for each $i=0,...,n$.
\end{theorem}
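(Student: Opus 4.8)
The plan is to write the $i$-th step of \eqref{eq: space time dis}--\eqref{eq: space time dis init} as the operator equation
\[
\mathbb{D}_i u_i = u_{i-1} + \tau f_{i\tau}, \qquad \mathbb{D}_i := I - \tau\,(L^h_{i\tau} + J^h),
\]
and to solve it by applying Lemma \ref{lem: solvability hilbert} twice: once in $X = L_2(\bR)$, to obtain the unique $L_2$-solution, and once in $X = H^m$, to see that this solution actually lies in $H^m$. For a \emph{fixed} $h \in \mathfrak{N}$ the operator $L^h_{i\tau} + J^h$ is bounded on $L_2(\bR)$ and on $H^m$ (with norm depending on $h$): the difference operators $\delta^h,\delta_{\pm h}$ are bounded on every $H^l$ and commute with $\D_x$, multiplication by $a, b, c$ and their spatial derivatives up to the relevant order is bounded, uniformly in $t$, by Assumption \ref{as: bounded coef}, and $J^h$ is a linear combination of translations (which are isometries) and second differences, with coefficients $\zeta^h_k$ and $\nu(B^h_k)$ summing to at most $\mu_2$ and $\mu_0$ respectively; this is the same observation used in the proof of Lemma \ref{lem: consistency}. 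Thus each $\mathbb{D}_i$ is a bounded operator on both spaces, and Lemma \ref{lem: solvability hilbert} becomes applicable as soon as the requisite coercivity is verified.

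The two coercivity estimates, crucially with constants \emph{independent of $h$}, are the core of the argument. On $L_2(\bR)$ I would prove
\[
\big((L^h_t + J^h)\phi, \phi\big) \leq N_1\,\|\phi\|_{L_2}^2, \qquad \phi \in L_2(\bR),\ t \in [0,T],\ h \in \mathfrak{N},
\]
with $N_1 = N_1(K)$: here $(J^h\phi,\phi)\leq 0$ by Lemma \ref{lem: non positive operator} (the case $l=j=0$), $(c_t\phi,\phi)\leq K\|\phi\|_{L_2}^2$, one discrete integration by parts turns $(b_t\delta^h\phi,\phi)$ into $\tfrac{1}{2h}\int (b_t(x)-b_t(x+h))\phi(x)\phi(x+h)\,dx$, bounded by $\tfrac{K}{2}\|\phi\|_{L_2}^2$ via the Lipschitz bound on $b$, and the only delicate term, $(a_t\delta^h\delta^h\phi,\phi)$, rearranges after a discrete integration by parts into a nonpositive term --- a negative multiple of $\int (a_t(x)+a_t(x+2h))|\phi(x+2h)-\phi(x)|^2\,dx$, discarded because $a_t\geq 0$ --- plus $\tfrac12\int (\delta^h\delta^h a_t)(x)\,\phi(x)^2\,dx$, which is $\leq \tfrac12\|\D_x^2 a_t\|_\infty\|\phi\|_{L_2}^2\leq \tfrac{K}{2}\|\phi\|_{L_2}^2$ because the second difference of $a_t$ is dominated by $\|\D_x^2 a_t\|_\infty$. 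On $H^m$, summing the inequality of Lemma \ref{lem: coercivity} over $l=0,1,\dots,m$ gives $\big((L^h_t+J^h)\phi,\phi\big)_{H^m}\leq N_2\,\|\phi\|_{H^m}^2$ for $\phi\in H^m$, with $N_2=N_2(K,m)$.

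With these estimates set $N' := 2T\max(N_1,N_2)$, which depends only on $K,T,m$. If $n>N'$ then $\tau=T/n<(2\max(N_1,N_2))^{-1}$, whence for every $\phi$,
\[
(\mathbb{D}_i\phi,\phi) = \|\phi\|^2 - \tau\big((L^h_{i\tau}+J^h)\phi,\phi\big) \geq \tfrac12\|\phi\|^2
\]
in $L_2(\bR)$ and, identically, in $H^m$. By Lemma \ref{lem: solvability hilbert} with $X=L_2(\bR)$, $\mathbb{D}_i$ is a bijection of $L_2(\bR)$; starting from $u_0^{h,\tau}=\psi\in H^m\subset L_2(\bR)$ and using that $f_{i\tau}\in H^m\subset L_2(\bR)$ by Assumption \ref{as: bounded coef}(ii), one defines $u_i^{h,\tau}:=\mathbb{D}_i^{-1}(u_{i-1}^{h,\tau}+\tau f_{i\tau})$ recursively, obtaining the unique $L_2$-solution of \eqref{eq: space time dis}--\eqref{eq: space time dis init}. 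Finally, Lemma \ref{lem: solvability hilbert} with $X=H^m$ gives, by induction on $i$: if $u_{i-1}^{h,\tau}+\tau f_{i\tau}\in H^m$, there is $w_i\in H^m$ with $\mathbb{D}_i w_i = u_{i-1}^{h,\tau}+\tau f_{i\tau}$, and $w_i=u_i^{h,\tau}$ by the $L_2$-uniqueness; hence $u_i^{h,\tau}\in H^m$ for every $i$.

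I expect the main obstacle to be the $h$-uniform bound on $(a_t\delta^h\delta^h\phi,\phi)$: a crude estimate costs a factor $h^{-1}$, and only the exact telescoping in the discrete integration by parts --- which trades a first difference of $a_t$ (of size $h\|\D_x a_t\|_\infty$, divided by $h^2$) for a \emph{second} difference of $a_t$ (of size $h^2\|\D_x^2 a_t\|_\infty$, divided by $h^2$) --- yields a constant free of $h$ and, crucially, free of any lower bound on $a_t$. This is the discrete counterpart of the classical device for degenerate second-order operators, and is precisely why Assumption \ref{as: bounded coef} asks for two derivatives of $a$; alternatively the estimate can be read off from Lemma~3.4 of \cite{IG}. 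Everything else --- boundedness of the operators for fixed $h$, the two inductions, and the $H^m$-coercivity via Lemma \ref{lem: coercivity} --- is routine.
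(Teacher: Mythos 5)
Your proposal is correct and follows essentially the same route as the paper: write each step as $\mathbb{D}_i u_i = F_i$ with $\mathbb{D}_i = I - \tau(L^h_{i\tau}+J^h)$, establish $h$-uniform coercivity of $\mathbb{D}_i$ on the relevant Sobolev scales for $\tau$ small, and invoke Lemma \ref{lem: solvability hilbert}. The only difference is cosmetic: where you verify the $L_2$-coercivity of $L^h_t$ by hand via discrete integration by parts, the paper simply cites Lemma \ref{lem: coercivity} (i.e.\ Lemma 3.4 of \cite{IG}) for every level $k=0,\dots,m$ at once, which is exactly the computation you sketch.
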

\begin{proof}
Let us write \eqref{eq: space time dis} in the form 
$$
\mathbb{D}_i u_i=F_i, \ i=1,...,n, 
$$
where 
$$
\mathbb{D}_i= I-\tau (L^h_{i\tau}+J^h), \ F_i=v_{i-1}+f_{(i-1)\tau}. 
$$
For each $i=1,...,n$, $\mathbb{D}_i$ is a bounded linear operator from $H^k$ to $H^k$ for all $k=0,...,m$. By Lemma \ref{lem: coercivity} we have
$$
(\mathbb{D}_i \phi, \phi)_k=\|\phi\|^2_{H^k}-\tau\left((L^h_{i\tau}+J^h)\phi, \phi\right)_k \geq \|\phi\|^2_{H^k}-\tau N \|\phi\|^2_{H^k},
$$
for all $k=0,..,m$, with $N$ depending only on $K$ and $m$. 
Hence, if $n > T N $, then we have with $\lambda :=1-(\tau/N)>0$
$$
(\mathbb{D}_i \phi, \phi)_k\geq \lambda \|\phi\|^2_{H^k}.
$$
The conclusion follows from the lemma above.

\end{proof}

\begin{theorem}
Let Assumptions \ref{as: bounded coef}, \ref{as: elipticity} and \ref{as: boundedness f} hold 
with $m \geq 4$ and
 let $(u^h_t)_{t\in[0,T]}$ and $(u^{h,\tau}_i)_{i=0}^n$ be the unique solutions of equations \eqref{eq: discr}-\eqref{eq: discr init con} and \eqref{eq: space time dis}-\eqref{eq: space time dis init} respectively (for $n >N'$). There exists a constant $N_1$ such that if $n>N_1$, then:

\begin{itemize}
\item[(i)] if Assumption \ref{as: continuity in time} holds with $l=m-3$,   then
\begin{equation}      \label{eq: time error m-3}
\max_{i \leq n} \|u^h_{i\tau}-u^{h,\tau}_i\|^2_{H^{m-3}}  \leq \tau ^{1\wedge \gamma}  N (K'+\mathcal{K}^2_m)
\end{equation} 

\item[(ii)] if Assumption \ref{as: continuity in time} holds with $l=m-4$,  then
\begin{equation}                         \label{eq: time error m-4}
\max_{i \leq n} \|u^h_{i\tau}-u^{h,\tau}_i\|^2_{H^{m-4}}  \leq \tau^{2\wedge \gamma} N (K'+\mathcal{K}^2_m),   
\end{equation}
\end{itemize}
where $N$ is a constant depending only on $K, C$, $T$, $m$, $\mu_0$ and $\mu_2$.  
\end{theorem}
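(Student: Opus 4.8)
The plan is to carry out the standard energy estimate for the implicit Euler scheme, comparing $u^h$ and $u^{h,\tau}$ directly in $H^l$ with $l=m-3$ in part (i) and $l=m-4$ in part (ii). First I would set $e_i:=u^h_{i\tau}-u^{h,\tau}_i$; by Theorem \ref{thm: existence uniqueness L2} we have $u^{h,\tau}_i\in H^m$, while Lemma \ref{lem: consistency} applied with $l=m$ gives $u^h_{i\tau}\in H^m$ and $\sup_{t\leq T}\|u^h_t\|_{H^m}^2\leq N\mathcal{K}_m^2$, so $e_i\in H^m\subset H^l$ for every $i$. Subtracting \eqref{eq: space time dis} from the integral form of \eqref{eq: discr} over $[(i-1)\tau,i\tau]$, and adding and subtracting $\tau(L^h_{i\tau}+J^h)u^h_{i\tau}$, produces the error identity
\[
e_i=e_{i-1}+\tau(L^h_{i\tau}+J^h)e_i+R_i,\qquad e_0=0,
\]
with local truncation error
\[
R_i=\int_{(i-1)\tau}^{i\tau}\Big(\big[(L^h_s+J^h)u^h_s+f_s\big]-\big[(L^h_{i\tau}+J^h)u^h_{i\tau}+f_{i\tau}\big]\Big)\,ds .
\]
The proof then has two essentially independent halves: a stability estimate turning this identity into $\max_{i\leq n}\|e_i\|_{H^l}^2\leq N\big(\sum_{i=1}^n\|R_i\|_{H^l}\big)^2$, and a consistency estimate showing $\big(\sum_{i=1}^n\|R_i\|_{H^l}\big)^2\leq N\tau^{p}(K'+\mathcal{K}_m^2)$ with $p=1\wedge\gamma$ for $l=m-3$ and $p=2\wedge\gamma$ for $l=m-4$.

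For the stability half I would take the $H^l$ inner product of the error identity with $e_i$ and invoke the coercivity bound $((L^h_{i\tau}+J^h)e_i,e_i)_{H^l}\leq N\|e_i\|_{H^l}^2$ of Lemma \ref{lem: coercivity} (used exactly as in the proof of Theorem \ref{thm: existence uniqueness L2}), together with Cauchy--Schwarz, to get $\|e_i\|_{H^l}^2-\|e_{i-1}\|_{H^l}^2\leq 2\tau N\|e_i\|_{H^l}^2+2\|R_i\|_{H^l}\|e_i\|_{H^l}$. Once $n$ is large enough that $2\tau N\leq\tfrac12$, the term $2\tau N\|e_i\|_{H^l}^2$ can be moved to the left at each step, and a discrete Gronwall inequality then yields, with $A:=\max_{i\leq n}\|e_i\|_{H^l}$ (finite, there being finitely many steps), the bound $A^2\leq NA\sum_{i=1}^n\|R_i\|_{H^l}$, i.e.\ the claimed reduction. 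This part is routine, and it is here that the threshold $N_1$, depending only on $K,m,T$, gets fixed.

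For the consistency half I would split $R_i=R_i^{(1)}+R_i^{(2)}+R_i^{(3)}$ into the integrals over $[(i-1)\tau,i\tau]$ of $(L^h_s-L^h_{i\tau})u^h_s$, of $(L^h_{i\tau}+J^h)(u^h_s-u^h_{i\tau})$, and of $f_s-f_{i\tau}$, respectively. For $R_i^{(1)}$ and $R_i^{(3)}$, Assumption \ref{as: continuity in time} with the appropriate $l$ (namely $m-3$ in part (i), $m-4$ in part (ii)) bounds the $H^l$ norms of the increments $a_s-a_{i\tau}$, $b_s-b_{i\tau}$, $c_s-c_{i\tau}$ and of $f_s-f_{i\tau}$ by $C^{1/2}|s-i\tau|^{\gamma/2}$; combined with the argument of Lemma \ref{lem: boundedness of L^h J^h} and $\|u^h_s\|_{H^{l+2}}\leq\|u^h_s\|_{H^m}\leq N\mathcal{K}_m$ (note $l+2\leq m$) this gives $\|(L^h_s-L^h_{i\tau})u^h_s\|_{H^l}\leq N|s-i\tau|^{\gamma/2}\mathcal{K}_m$, so after integrating over the subinterval and summing the $n=T/\tau$ contributions, $R^{(1)}$ and $R^{(3)}$ together contribute at most $N\tau^{\gamma}(K'+\mathcal{K}_m^2)$ to the squared sum; this is $\leq N\tau^{1\wedge\gamma}(K'+\mathcal{K}_m^2)$ in part (i) and $\leq N\tau^{2\wedge\gamma}(K'+\mathcal{K}_m^2)$ in part (ii), with the regime $\gamma>2$ trivial since the coefficients are then time-independent and $R^{(1)}$ vanishes. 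For $R_i^{(2)}$ I would use $u^h_s-u^h_{i\tau}=\int_{i\tau}^s\big[(L^h_r+J^h)u^h_r+f_r\big]\,dr$ together with Lemma \ref{lem: boundedness of L^h J^h} and Assumption \ref{as: boundedness f} to get $\sup_r\|\partial_r u^h_r\|_{H^{m-2}}\leq N(K'+\mathcal{K}_m^2)^{1/2}$, hence $\|u^h_s-u^h_{i\tau}\|_{H^{m-2}}\leq N|s-i\tau|(K'+\mathcal{K}_m^2)^{1/2}$. In part (ii), Lemma \ref{lem: boundedness of L^h J^h} with $l=m-4$ then gives $\|(L^h_{i\tau}+J^h)(u^h_s-u^h_{i\tau})\|_{H^{m-4}}\leq N|s-i\tau|(K'+\mathcal{K}_m^2)^{1/2}$, and after integrating and summing $R^{(2)}$ contributes $\leq N\tau^2(K'+\mathcal{K}_m^2)\leq N\tau^{2\wedge\gamma}(K'+\mathcal{K}_m^2)$. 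In part (i) one spatial derivative is missing, so I would interpolate, $\|u^h_s-u^h_{i\tau}\|_{H^{m-1}}\leq N\|u^h_s-u^h_{i\tau}\|_{H^{m-2}}^{1/2}\|u^h_s-u^h_{i\tau}\|_{H^m}^{1/2}\leq N|s-i\tau|^{1/2}(K'+\mathcal{K}_m^2)^{1/2}$, whence Lemma \ref{lem: boundedness of L^h J^h} with $l=m-3$ gives $\|(L^h_{i\tau}+J^h)(u^h_s-u^h_{i\tau})\|_{H^{m-3}}\leq N|s-i\tau|^{1/2}(K'+\mathcal{K}_m^2)^{1/2}$ and $R^{(2)}$ contributes $\leq N\tau(K'+\mathcal{K}_m^2)\leq N\tau^{1\wedge\gamma}(K'+\mathcal{K}_m^2)$. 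Adding the three contributions gives the consistency bound, and feeding it into the stability half completes both (i) and (ii).

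The step I expect to be the main obstacle is the consistency estimate for $R_i^{(2)}$ in part (i): since $L^h_t+J^h$ loses two spatial derivatives while $\partial_t u^h$ is only controlled in $H^{m-2}$, the direct estimate places $R_i^{(2)}$ in $H^{m-4}$ rather than in the required $H^{m-3}$, and it is the Sobolev interpolation inequality that supplies the missing half-derivative, at the acceptable cost of a factor $\tau^{1/2}$ rather than $\tau$ per subinterval (harmless here, since part (i) only targets the rate $\tau^{1\wedge\gamma}$). A further point that needs care is the exact tracking of the H\"older exponent $\gamma$ through the coefficient and forcing increments $R_i^{(1)},R_i^{(3)}$, in particular the regime $\gamma>2$ in part (ii), where the target $\tau^{2\wedge\gamma}=\tau^2$ must be produced entirely by $R_i^{(2)}$.
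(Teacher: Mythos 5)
Your proposal is correct and follows essentially the same route as the paper: the same error equation for the implicit scheme, stability via the coercivity of Lemma \ref{lem: coercivity} plus discrete Gronwall, and the same three-way splitting of the local truncation error, with the crucial half-order modulus of continuity of $u^h$ in $H^{m-1}$ driving part (i) and the full first-order modulus in $H^{m-2}$ driving part (ii). The only differences are cosmetic: you obtain the $H^{m-1}$ increment bound $\|u^h_t-u^h_s\|_{H^{m-1}}\leq N|t-s|^{1/2}(\mathcal{K}_m^2+K')^{1/2}$ by Sobolev interpolation between $H^{m-2}$ and $H^{m}$, whereas the paper gets it from the energy identity for $\|u^h_t-u^h_s\|^2_{H^{m-1}}$ together with the duality bound $|(\phi',\phi)_{m-1}|\leq\|\phi'\|_{H^m}\|\phi\|_{H^{m-2}}$, and your stability step controls the truncation error in $L^1$ in time (via Cauchy--Schwarz against $\|e_i\|$) while the paper uses Young's inequality with weight $\tau$ to get an $L^2$-in-time bound; the two are equivalent here by Cauchy--Schwarz in $t$.
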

\begin{proof}
In order to ease the notation, let us introduce $e_i=u^h_{i\tau}-u^{h,\tau}_i$. We have that $(e_i)_{i=0}^n$ satisfies
\begin{align*}
e_i&=e_{i-1}+\tau  \mathbb{R}_i e_i+ \mathbf{F}_i, \ i=1,,,.n, \\
e_0&=0,
\end{align*}
where 
$$
\mathbb{R}_i= L^h_{i\tau}+J^h, \ \mathbf{F}_i:=\int_{(i-1)\tau}^\tau F_t  \ dt
$$
$$
\ \  F_t:=(L^h_t+J^h)u^h_t-(L^h_{k(t)}+J^h) u^h_{k(t)}+ f_t-f_{k(t)},
$$
and  $k(t)= \lfloor nt \rfloor /n$. 
By the identity $\|b\|^2-\|a\|^2=2(b,b-a)-\|b-a\|^2$, we have for $j \leq m-3$ and $i \geq 1$, 
\begin{equation}        \label{eq: error ito}
\|\D^j_x e_i\|^2-\|\D^j_x e_{i-1}\|^2\leq 2 \tau ( \D^j_x e_i, \mathbb{R}_i \D^j_x e_i ) + 2(\D^j_x e_i, \D^j_x \mathbf{F}_i)
\end{equation}
By Lemma \ref{lem: coercivity} we have 
$$
2 \tau ( \D^j_x e_i, \mathbb{R}_i \D^j_x e_i )  \leq  \tau N \|\D^j_x e_i\|^2,
$$
while by Young's inequality we have
\begin{align*}
2(\D^j_x e_i, \D^j_x \mathbf{F}_i)& \leq \tau\|\D^j_x e_i,\|^2+\tau^{-1}\|\int_{(i-1)\tau}^{i \tau} \D^j_x F_t dt\|^2 \\
& \leq \tau\|\D^j_x e_i,\|^2+\int_{(i-1)\tau}^{i \tau} \|\D^j_x F_t \|^2 \ dt. 
\end{align*}
By using these inequalities and summing up \eqref{eq: error ito} over $0\leq j\leq q$, where  $q \in \{m-4,m-3\}$,  and  over $i\leq \rho \leq n$, we get
$$
\|e_\rho \|^2_{H^q} \leq \tau N \sum_{i =1}^\rho\|e_i\|^2_{H^q}+ N \int_0^T \|F_t\|^2_{H^q} \ dt< \infty,
$$
where $N$ is a constant depending only on $m$ and $K$. Let us set $N_1:=TN$. By the discrete Gronwall inequality we have for $n>N_1$ (i.e.  for $\tau < 1/N$)
$$
\max_{\rho \leq n} \|e_\rho\|^2_{H^q} \leq N \int_0^T \|F_t\|^2_{H^q} \ dt,
$$
where $N$ depends only on $m, K$ and $T$. We only have to estimate the term at the right hand side of the above inequality.
\begin{align}        \nonumber 
\int_0^T \|F_t\|^2_{H^q} dt  \leq & N\int_0^T \|(L^h_t-L^h_{k(t)}) u^h_t \|^2_{H^q} dt  \\           \nonumber 
 +&N \int_0^T \| (J^h+L^h_{k(t)})(u^h_t-u^h_{k(t)})\|^2_{H^q}  dt \\
\label{eq: estimate F}
 +&N\int_0^T \|f_t-f_{k(t)}\|^2_{H^q} dt .       
\end{align}
Let us show first \eqref{eq: time error m-3} under Assumption \ref{as: continuity in time} with $l=m-3$.  By Assumption \ref{as: continuity in time} and \eqref{eq: main estimate} we have  with $q=m-3$
\begin{align}                        \label{eq: estimate L_t-L_k(t)}
\int_0^T \|(L^h_t-L^h_{k(t)}) u^h_t \|^2_{H^q} dt & \leq \tau^\gamma N  \int_0^T \|u^h_t\|^2_{H^{q+2}} dt \leq \tau^\gamma N  \mathcal{K}^2_{q+2}  \\          \label{eq: estimate f_t-f_k(t)}
\int_0^T \|f_t-f_{k(t)}\|^2_{H^q} dt & \leq  \tau ^\gamma  T .
\end{align}
By Lemma \ref{lem: boundedness of L^h J^h} we have
$$
\int_0^T \| (J^h+L^h_{k(t)})(u^h_t-u^h_{k(t)})\|^2_{H^q}  dt \leq N\int_0^T \|u^h_t-u^h_{k(t)}\|^2_{H^{q+2}} dt. 
$$
Therefore, in order to show $(i)$ we only need to show that 
\begin{equation}           \label{eq: estimate u_t-u_k(t)}
\int_0^T \|u^h_t-u^h_{k(t)}\|^2_{H^{m-1}} dt \leq N \tau  (\mathcal{K}^2_m +K').
\end{equation}
For $\phi \in H^{m-1}$, and $\phi' \in H^m$,  one has $|(\phi', \phi)_m| \leq \|\phi'\|_{H^m}\|\phi\|_{H^{m-2}}$. Using this  and Young's inequality we obtain for $s,t \in [0,T]$ with $s \leq t$
\begin{align*}           
\|u^h_t-u^h_s\|^2_{m-1}& =2\int_s^t \left(u^h_r-u^h_s, (L^h_r+J^h)u_r+ f_r \right)_{m-1}  dr \\
& \leq N \int_s^t \|u^h_r-u^h_s\|^2_{H^m}+\|(L^h_r+J^h)u^h_r\|^2_{H^{m-2}}+ \|f_r\|^2_{H^{m-2}} dr  \\
& \leq N \int_s^t \sup_{t'  \leq T } \|u^h_{t'}\|^2_{H^m} +\|f_r\|^2_{H^{m-2}} dr \\
& \leq N(\mathcal{K}_m^2+K')(t-s),
\end{align*}
where the last inequality follows by Lemma \ref{lem: consistency} and Assumption \ref{as: boundedness f}. This shows \eqref{eq: estimate u_t-u_k(t)}, which combined with \eqref{eq: estimate L_t-L_k(t)} and  \eqref{eq: estimate f_t-f_k(t)} (with $q=m-3$), imply \eqref{eq: time error m-3} by virtue of \eqref{eq: estimate F}. In order to show \eqref{eq: time error m-4} under Assumption \ref{as: continuity in time} with $l=m-4$, by virtue of \eqref{eq: estimate F}, \eqref{eq: estimate L_t-L_k(t)} and \eqref{eq: estimate f_t-f_k(t)}, with $q=m-4$, it suffices to show 
$$
\int_0^T \|u^h_t-u^h_{k(t)}\|^2_{H^{m-2}} dt \leq N \tau^2  (\mathcal{K}^2_m +K').
$$
For $t,s \in [0,T]$ we have
\begin{align*}
\|u^h_t-u^h_s\|^2_{H^{m-2}} & \leq  \big\|\int_s^t (L^h_r+J^h)u^h_r+f_r \ dr \big\|^2_{H^{m-2}} \\
& \leq \big( \int_s^t  N \sup_{t'\leq T } \|u^h_{t'}\|_{H^m}+\|f_r\|_{H^{m-2}} \ dr \big)^2 \\
& \leq N(t-s)^2(\mathcal{K}^2_m+K').
\end{align*}
This brings the proof to an end. 

\end{proof}

\section{Proofs of the main results}         \label{proofs}
We are now ready to prove the main theorems.
\begin{proof}[Proof of Theorem \ref{thm: main theorem space}]
Let $\mathfrak{I}$, $\mathfrak{K}$ denote the continuous embeddings $H^{m-3} \hookrightarrow l_2(\mathbb{G}_h)$ and $H^{m-3} \hookrightarrow C^{0,1/2}$. Let $u^h$ and $v^h$ denote the solutions of \eqref{eq: discr}-\eqref{eq: discr init con} and \eqref{eq: discr v}-\eqref{eq: discr  v init con} (the same equation,  considered on $l_2(\mathbb{G}_h)$ and $L_2(\mathbb{G}_h)$). By applying $\mathfrak{I}$ to both sides of \eqref{eq: discr} we see that
$\mathfrak{I}u^h$ satisfies \eqref{eq: discr v}-\eqref{eq: discr  v init con}. Therefore $\mathfrak{I}u^h=v^h$ by uniqueness. Notice also that $\mathfrak{K}u^h_t = \mathfrak{I}u^h_t$, and $u_t(x)=\mathfrak{I}u_t(x)=\mathfrak{K}u_t(x)$,  for all $t \in [0,T]$ and $x \in \mathbb{G}_h$. 
Hence 
\begin{align*}
\sup_{x \in \mathbb{G}_h} |v^h_t(x)-u_t(x)|& =\sup_{x \in \mathbb{G}_h} |\mathfrak{I}u^h_t(x)-u_t(x)| \\
&= \sup_{x \in\mathbb{G}_h} |\mathfrak{K}u^h_t(x)-\mathfrak{K} u_t(x)|  \\
& \leq N \|u^h_t-u_t\|_{H^{m-3}}, 
\end{align*}
and 
\begin{align*}
\| v^h_t-u_t\|_{l_2(\mathbb{G}_h)} &=\| \mathfrak{I}u^h_t-\mathfrak{I}u_t\|_{l_2(\mathbb{G}_h)} \\
& \leq  N \|u^h_t-u_t\|_{H^{m-3}}, 
\end{align*}
where $N$ depends only on $m$. The conclusion now  follows from Theorem \ref{thm: spatial error Sobolev}.
\end{proof}
We move to the proof of Theorem \ref{thm: existence uniqueness l2}. Notice that the existence part follows easily from Theorem \ref{thm: existence uniqueness L2}. Namely, if $u^{h,\tau}$ solves \eqref{eq: space time dis}-\eqref{eq: space time dis init}, then $\mathfrak{I}u^{h,\tau}$ solves \eqref{eq: space time dis v}-\eqref{eq: space time dis init v}. Also, the uniqueness part is immediate if for example one poses a CLF condition on $\tau$ and $h$. However such a condition is obviously not necessary, and therefore, in order to prove Theorem \ref{thm: existence uniqueness l2}, we will proceed as in the proof of Theorem \ref{thm: existence uniqueness L2}. Hence, we need the following, whose proof is essentially given in \cite{GFD}, but we give a sketch for the convenience of the reader.
\begin{lemma}                                \label{lem: coerc l2}
Let Assumptions \ref{as: bounded coef} and \ref{as: elipticity} hold with $m=1$. Then for any $\phi \in l_2(\mathbb{G}_h)$ we have
$$
\left((L^h_t+J^h)\phi, \phi \right)_{l_2(\mathbb{G}_h)} \leq N \|\phi\|^2_{l_2(\mathbb{G}_h)},
$$
where $N$ depends only on $K$. 
\end{lemma}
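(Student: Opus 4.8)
The plan is to prove the coercivity estimate on $l_2(\mathbb{G}_h)$ by handling the three pieces of $L^h_t + J^h$ separately, exactly mirroring the $L_2(\bR)$ arguments but now with discrete sums in place of integrals. The key observation is that all the estimates used in the proof of Lemma~\ref{lem: non positive operator} and Lemma~\ref{lem: coercivity}---translation invariance and the Cauchy--Schwarz (H\"older) inequality---have perfect discrete analogues on $\mathbb{G}_h = h\bZ$: translation by a multiple of $h$ is a bijection of $\mathbb{G}_h$ preserving $\|\cdot\|_{l_2(\mathbb{G}_h)}$, and $(\phi(\cdot + hk), \phi)_{l_2(\mathbb{G}_h)} \leq \|\phi\|^2_{l_2(\mathbb{G}_h)}$ by Cauchy--Schwarz. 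So the whole structure of the earlier proofs transfers.

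\begin{proof}
Throughout, write $\|\cdot\| := \|\cdot\|_{l_2(\mathbb{G}_h)}$ and $(\cdot,\cdot):=(\cdot,\cdot)_{l_2(\mathbb{G}_h)}$, and note that for any $j \in \bZ$ the map $\phi \mapsto \phi(\cdot + jh)$ is an isometry of $l_2(\mathbb{G}_h)$; consequently $(\phi(\cdot+jh),\phi) \leq \|\phi\|^2$ for all $\phi \in l_2(\mathbb{G}_h)$ by the Cauchy--Schwarz inequality.

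\emph{Step 1: $(J^h \phi, \phi) \leq 0$.} This is the discrete counterpart of Lemma~\ref{lem: non positive operator}. For $J^h_2$, using the isometry property,
$$
(J^h_2 \phi, \phi) = \sum_{k \in \bB_h} \big( (\phi(\cdot + hk), \phi) - \|\phi\|^2 \big) \nu(B^h_k) \leq 0.
$$
For $J^h_1$ it suffices, as in Lemma~\ref{lem: non positive operator}, to show that for each $k \in \bA_h$ the quantity $\big(\sum_{l=0}^{|k|-1}\theta_k^l \delta_{-h}\delta_h \phi(\cdot + s_k l h), \phi\big) \leq 0$. The same telescoping identities computed in the proof of Lemma~\ref{lem: non positive operator} express this sum (for $s_k = 1$) as a positive multiple of $\phi(\cdot + kh) + \phi(\cdot + (k-1)h) + (2k-1)\phi(\cdot - h) - (2k+1)\phi(\cdot)$, and pairing with $\phi$ and applying Cauchy--Schwarz termwise (each translate contributes at most $\|\phi\|^2$, and the coefficients of the translates sum to $2k+1$) yields a nonpositive quantity; the case $s_k = -1$ is identical. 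Hence $(J^h \phi, \phi) \leq 0$.

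\emph{Step 2: the differential part.} Write $(L^h_t \phi, \phi) = (a_t \delta^h\delta^h \phi, \phi) + (b_t \delta^h \phi, \phi) + (c_t \phi, \phi)$. The term $(c_t \phi, \phi) \leq K\|\phi\|^2$ is immediate. For the first two terms one invokes Lemma~3.4 of \cite{IG} (which is precisely the discrete integration-by-parts / coercivity estimate on $l_2(\mathbb{G}_h)$ for $a\delta^h\delta^h + b\delta^h$ under Assumptions~\ref{as: bounded coef}(i) and \ref{as: elipticity} with $m=1$), giving $(a_t\delta^h\delta^h\phi + b_t\delta^h\phi, \phi) \leq N\|\phi\|^2$ with $N$ depending only on $K$. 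Here the nonnegativity $a_t \geq 0$ from Assumption~\ref{as: elipticity} is what makes the second-difference term manageable: summation by parts turns it into a negative-type term plus lower-order corrections controlled by $\D_x a$ and $b$, both bounded by $K$.

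\emph{Step 3: combine.} Adding the bounds from Steps 1 and 2 gives $\big((L^h_t + J^h)\phi, \phi\big) \leq N\|\phi\|^2$ with $N$ depending only on $K$, as claimed.
\end{proof}

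The main obstacle is really Step 1, and within it the $J^h_1$ term: one must carry out the same discrete telescoping computation as in the proof of Lemma~\ref{lem: non positive operator} and then verify that the resulting linear combination of translates of $\phi$, when paired with $\phi$, is nonpositive---this works because the coefficient of the ``diagonal'' term $\phi(\cdot)$ exactly cancels the total mass of the off-diagonal coefficients, so Cauchy--Schwarz closes the estimate with no slack. The $L^h_t$ part (Step 2) is genuinely routine given that the relevant discrete coercivity estimate is quoted from \cite{IG} as Lemma~3.4; since the present paper has already used ``Lemma 3.4 from \cite{IG}'' to deduce Lemma~\ref{lem: coercivity}, reusing it here with $m=1$ is legitimate. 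It is also worth noting at the outset that $L^h_t + J^h$ is a bounded operator on $l_2(\mathbb{G}_h)$ (by the argument in the remark following \eqref{eq: discr v}--\eqref{eq: discr  v init con}), so Lemma~\ref{lem: coerc l2} together with Lemma~\ref{lem: solvability hilbert} applied to $\mathbb{D}_i = I - \tau(L^h_{i\tau} + J^h)$ on $X = l_2(\mathbb{G}_h)$ immediately gives the solvability asserted in Theorem~\ref{thm: existence uniqueness l2}, exactly as in the proof of Theorem~\ref{thm: existence uniqueness L2}.
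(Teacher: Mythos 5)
Your proof is correct and follows essentially the same route as the paper: the negative semi-definiteness of $J^h$ is transferred to $l_2(\mathbb{G}_h)$ via translation invariance and Cauchy--Schwarz (with the off-diagonal coefficients summing exactly to the diagonal one), and the $a_t\delta^h\delta^h$ term is handled by discrete summation by parts, using $a_t\geq 0$ to discard the negative-type term and bounding the remaining first- and zero-order contributions by $K$. The only cosmetic difference is the attribution: the paper derives the $l_2(\mathbb{G}_h)$ bound for the differential part from Lemma~3.3 of \cite{GFD} (spelling out the Leibniz-type identity $\delta^h(uv)=(\delta^h u)T^h v+(\delta^h v)T^h u$), whereas you cite Lemma~3.4 of \cite{IG}, which the paper itself invokes only for the Sobolev-space version in Lemma~\ref{lem: coercivity}; the underlying mechanism you describe is nonetheless the same.
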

\begin{proof}
One can replace $(\cdot, \cdot)$ with $(\cdot, \cdot)_{l_2(\mathbb{G}_h)}$ in the proof of Lemma \ref{lem: non positive operator} to obtain
$$
(J^h\phi, \phi )_{l_2(\mathbb{G}_h)} \leq  0.
$$
Consequently we only need that $(L_t^h \phi, \phi)_{l_2(\mathbb{G}_h)} \leq N \|\phi\|^2_{l_2(\mathbb{G}_h)}$. This is proved in \cite{GFD}. In the proof of Lemma 3.3 in  that article,  one can replace $(\cdot, \cdot)$ with $(\cdot, \cdot)_{l_2(\mathbb{G}_h)}$ to obtain
\begin{equation}            \label{eq: corc in l2}
|(\delta^h \phi, (\delta^h a_t )T^h \phi)_{l_2(\mathbb{G}_h)}| +|(b_t \delta^h \phi, \phi)_{l_2(\mathbb{G}_h)}+|(c_t\phi, \phi)|_{l_2(\mathbb{G}_h)} \leq N \|\phi\|^2_{l_2(\mathbb{G}_h)}, 
\end{equation}
where $T^h\phi(x)= (\phi(x+h)+\phi(x-h))/2$, and $N$ depends only on $K$. 
It is shown also in \cite{GFD} (see (3.3)) that for functions $u,v$
$$
\delta^h(uv)=(\delta^hu)T^hv+(\delta^hv)T^hu. 
$$
Therefore, 
\begin{align} \nonumber
(a_t\delta^h \delta^hu_t,u_t)_{l_2(\mathbb{G}_h)}=&-( \delta^hu_t,\delta^h(a_tu_t))_{l_2(\mathbb{G}_h)} \\   \label{eq: addu,u}
=& - (\delta^hu_t,(\delta^ha_t)T^hu_t)_{l_2(\mathbb{G}_h)}-( \delta^hu_t,(T^ha_t)\delta^hu_t)_{l_2(\mathbb{G}_h)}.
\end{align}
Notice that by virtue of Assumption \ref{as: elipticity}, we have
$$
-( \delta^hu_t,(T^ha_t)\delta^hu_t)_{l_2(\mathbb{G}_h)} \leq 0.
$$
Hence, \eqref{eq: addu,u} and \eqref{eq: corc in l2} imply 
$$
(L^h_t\phi, \phi )_{l_2(\mathbb{G}_h)} \leq N \|\phi\|^2_{l_2(\mathbb{G}_h)}.
$$
\end{proof}

\begin{proof}[Proof of Theorem \ref{thm: existence uniqueness l2}]
The proof is the same as the one of Theorem \ref{thm: existence uniqueness l2}, this time using Lemma \ref{lem: coerc l2} instead  of Lemma \ref{lem: coercivity}.
\end{proof}

\begin{proof}[Proof of Theorem \ref{thm: main theorem}]
The conclusion follows by Sobolev embeddings and Theorem \ref{thm: existence uniqueness L2}, similarly to the proof of Theorem \ref{thm: main theorem space}.
\end{proof}


\begin{thebibliography}{00}



\bibitem{CT} R. Cont, P. Tankov,
Financial modelling with jump processes. 
Chapman \& Hall/CRC Financial Mathematics Series. Chapman \& Hall/CRC, Boca Raton, FL, 2004. xvi+535 pp.

\bibitem{CV} R. Cont, E. Voltchkova,
A finite difference scheme for option pricing in jump diffusion and exponential L\'evy models. 
SIAM J. Numer. Anal. 43 (2005), no. 4, 1596-1626 



\bibitem{KD} K. Dareiotis, Stochastic partial differential and integro-differential equations, PhD Thesis, University of Edinburgh (2015)

\bibitem{KD1} K.Dareiotis, A note on degenerate stochastic integro-differential equations,  arXiv:1406.5649

\bibitem{DL} K. Dareiotis, J.M. Leahy, Finite difference schemes for linear stochastic integro-differential equations, Stoch. Proc. Appl., (2016) doi:10.1016/j.spa.2016.04.025

\bibitem{DeF} C. De Franco, P. Tankov,   X. Warin, Numerical methods for the quadratic hedging problem in Markov models with jumps, http://arxiv.org/abs/1206.5393

\bibitem{GG} M. Gerencs\'er, I. Gy\"ongy,  Finite difference schemes for stochastic partial differential equations in Sobolev spaces. Appl. Math. Optim. 72 (2015), no. 1, 77-100.

\bibitem{MaGy} M. Gerencs\'er, I. Gy\"ongy, Localization errors in solving stochastic partial differential equations in the whole space,  arXiv:1508.05535 

\bibitem{IG} I. Gy\"ongy,
On finite difference schemes for degenerate stochastic parabolic partial differential equations.
Problems in mathematical analysis. No. 61. 
J. Math. Sci. (N. Y.) 179 (2011), no. 1, 100-126

\bibitem{GFD} I. Gy\"ongy, On stochastic finite difference schemes. Stoch. Partial Differ. Equ. Anal. Comput. 2 (2014), no. 4, 539-583

\bibitem{GYKR} I. Gy\"ongy, N.V. Krylov, Accelerated finite difference schemes for stochastic partial differential equations in the whole space, SIAM J. Math. Anal. 42 (2010), no. 5, 2275-2296.


\bibitem{GK} I. Gy\"ongy, N. V.  Krylov,
Accelerated finite difference schemes for second order degenerate elliptic and parabolic problems in the whole space. 
Math. Comp. 80 (2011), no. 275, 1431-1458.

\bibitem{ML}  J.M. Leahy, R.  Mikulevi\v{c}ius, On degenerate linear stochastic evolution equations driven by jump processes. Stochastic Process. Appl. 125 (2015), no. 10, 3748-3784.

\bibitem{MIJ} A. Mijatovi\'c, M. Vidmar, S.  Jacka, Markov chain approximations for transition densities of L\'evy processes. Electron. J. Probab. 19 (2014), no. 7, 37 pp.

\bibitem{HY} H. Yoo,  Semi-discretization of stochastic partial differential equations on $\bR^1$ by a finite-difference method. Math. Comp. 69 (2000), no. 230, 653-666

\end{thebibliography}
\end{document}